\newtheorem{theorem}{Theorem}[section]
\newtheorem{lemma}[theorem]{Lemma}
\newtheorem{corollary}[theorem]{Corollary}
\newtheorem{proposition}[theorem]{Proposition}
\newtheorem{definition}[theorem]{Definition}
\newenvironment{proof}{\trivlist\item[\hskip \labelsep{\emph{\textbf{Proof}}}:]}{\nopagebreak \hfill $\Box$ \endtrivlist}
\def\R{\mathbb{R}}
\def\r3{\mathbb{R}^3}
\def\H{\mathcal{H}}
\def\s{\mathbb{S}}
\def\h{\mathfrak{h}}
\def\w{\textbf{w}}
\title{Compact surfaces with boundary with  prescribed mean curvature  depending on the Gauss map} 
\author{Antonio Bueno\\
\small Departamento de Ciencias\\
 \small Centro Universitario de la Defensa de San Javier\\
 \small  E-30729 Santiago de la Ribera, Spain\and
 Rafael L\'opez\\
\small  Departamento de Geometr\'{\i}a y Topolog\'{\i}a\\
\small  Universidad de Granada, Granada, Spain}
 \date{}
\begin{document}

\maketitle
  \begin{abstract}
Given a $C^1$ function $\H$ defined in the unit sphere $\s^2$, an $\H$-surface $M$ is a surface in the Euclidean space $\R^3$ whose mean curvature $H_M$ satisfies $H_M(p)=\H(N_p)$, $p\in M$, where $N$ is the Gauss map of $M$.   Given a closed simple curve $\Gamma\subset\R^3$ and a function $\H$, in this paper we investigate the geometry of compact $\H$-surfaces  spanning $\Gamma$ in terms of $\Gamma$.  Under mild assumptions on $\H$, we prove non-existence of closed $\H$-surfaces, in contrast with the classical case of constant mean curvature. We give conditions on $\H$ that ensure that if $\Gamma$ is a circle, then $M$ is a rotational surface. We also establish the existence of estimates of the area of $\H$-surfaces in terms of the height of the surface.		
  \end{abstract}
  
 \noindent Mathematics Subject Classification: 53A10, 53C42, 35J93, 35B06, 35B50.\\
 Keywords: $\H$-surfaces, maximum principle, Alexandrov reflection method, translators.

\section{Introduction}

In 1910, Bernstein considered the prescribed mean curvature equation
\begin{equation}\label{q00}
\mathcal{M}(u):=\mbox{div}\left(\frac{Du}{\sqrt{1+|Du|^2}}\right)=(1+|Du|^2)^m,
\end{equation}
where $m$ is an integer, and $u=u(x)$ is a function defined in some domain of $\R^2$ \cite{be1,be2}. The left-hand side $\mathcal{M}(u)$ is the known mean curvature operator. In \cite[p. 240]{be1}, and when $m=-1/2$, he called  Eq. \eqref{q00}   {\it ``l'\'equation des surfaces, dont la courbure en chaque point is proportionnelle (\'egale) au cosinus de l'angle de la normale en ce point avec l'axe des $z$''}. The unit normal vector field on a surface in Euclidean space $\R^3$ is known as the Gauss map. Motivated by Bernstein, in this paper  we study compact surfaces in Euclidean space $\R^3$ whose mean curvature is a function of the Gauss map. In PDE terminology, this equation can be viewed as the  prescribed mean curvature equation  
\begin{equation}\label{q0}
\mathcal{M}(u) =\mathcal{F}(Du),
\end{equation}
in subsets of $\R^2$ , where the function $\mathcal{F}\colon\R^2\to\R$ is given.  Although the literature is enormous replacing $\mathcal{F}$ by a function of type $\mathcal{F}=\mathcal{F}(x,u)$, the case considered in Eq. \eqref{q0} is rather smaller. It was Bernstein itself who proved solvability of the Dirichlet problem  of \eqref{q00} for arbitrary boundary data in convex analytic domains, provided $m\leq -1/2$. 

Sixty years later, Serrin revisited this  equation in his seminal paper \cite{se0}. Although this article is rather known by the solvability of the Dirichlet problem for the constant mean curvature equation, Serrin considers many other types of quasilinear elliptic equations. In \cite[pages 477-8]{se0}, he investigated again the Dirichlet problem of the equation studied by Bernstein, changing analyticity of the domain by smoothness as well as he generalized the results to arbitrary dimensions.  More recently, equations of type \eqref{q0} have been considered, specially for the study of radial solutions and the Dirichlet problem: see, for example, \cite{ber,bf,cc,cc0,cc1,coo,el,mx,mz}.

Returning to Eq. \eqref{q00}, it is surprising, even intriguing, that the case $m=-1/2$, that is
\begin{equation}\label{e-t}
\mbox{div}\left(\frac{Du}{\sqrt{1+|Du|^2}}\right)=\frac{1}{\sqrt{1+|Du|^2}},
\end{equation}
has acquired considerable interest in recent times.  Solutions of Eq. \eqref{e-t} are called translating solitons of the mean curvature flow which evolve purely by translations. The theory of the mean curvature flow is of high activity  in the last two decades due to the pionnering work of Huisken  \cite{hu}. Translating solitons are eternal solutions of the mean curvature flow in the sense that   their evolution exists for all times $-\infty<t<\infty$. These surfaces are of special interest since they arise as a type of singularity in the mean curvature flow. Notice that the flow can develop singularities, that is, solutions which become non-smooth in finite time. The so-called type II singularities are related with translating solitons because if the initial surface in the flow is mean convex, and it develops a type II singularity, then the limit surface, after rescaling the flow, is a translating soliton \cite{hs}.

A strategy to address with Eq. \eqref{q0} is that the Gauss map of the graph $M$ of $u$, when regarded as a surface in Euclidean space $\R^3$, is 
$$N_p=\frac{(-Du,1)}{\sqrt{1+|Du|^2}}(x),\quad p=(x,u(x))\in M.$$
If $e_3=(0,0,1)$, then $\langle N,e_3\rangle=1/\sqrt{1+|Du|^2}$, where $\langle,\rangle$ is the Euclidean metric of $\R^3$. This says us that the right-hand side in \eqref{q0} can be expressed as 
$\mathcal{F}(Du)=\H(N_p)$, where $\H$ is a certain function defined on the unit sphere $\s^2$. This motivates the following definition:
\begin{definition} \label{def1}
Let $\H$ be a $C^1$ function in $\s^2$. A surface $M$ in $\R^3$ is said to be an $\H$-surface if its mean curvature $H_M$ satisfies  
\begin{equation}\label{h1}
H_M(p)=\H(N_p),\hspace{.5cm} \forall p\in M.
\end{equation}
\end{definition}

Notice that Eq. \eqref{h1} is a prescribed mean curvature equation depending on the Gauss map of the surface. This dependence on the Gauss map makes that surfaces defined by \eqref{h1} can be viewed as a type of anisotropic mean curvature equation \cite{ber}. Studying hypersurfaces in Lorentz-Minkowski space, the authors in \cite{coo} name ``gradient dependent prescribed mean curvature equation'' if $\mathcal{F}$ is of type $\mathcal{F}(x,u,Du)$. A last remark about the above definition is that if we replace in \eqref{h1} the mean curvature by the Gauss curvature, the solutions of the corresponding equations are just the solutions of the well-known Minkowski problem for ovaloids \cite{Min}.

 For particular choices of the prescribed function, some deeply studied geometric theories arise. We highlight the following examples:
\begin{enumerate}
\item Surfaces of constant mean curvature (CMC surfaces for short). Here $\H=H_0\in\R$ is a constant. If $H_0=0$ we have minimal surfaces.
\item $\lambda$-translators. The prescribed function is $\H(x)=\langle x,\w\rangle+\lambda$, where $\w\in\s^2$ and $\lambda\in\R$. The vector $\w$ is called the density vector. When $\lambda=0$, we have translating solitons   of the mean curvature flow.
\item Solitons of powers of the mean curvature flow. The mean curvature of these surfaces is $H_M(p)=\langle N_p,v\rangle^\alpha+\lambda$, where $\alpha>0$ and $\lambda\in\R$. When $\alpha>1$, these surfaces are $\H$-surfaces for $\H(x)=\langle x,v\rangle^\alpha+\lambda$. Notice that $\H\in C^1(\s^2)$.
 \end{enumerate}

 The theory of complete, non-compact $\H$-surfaces has been recently being developed by the first author in joint work with Gálvez and Mira \cite{BGM1,BGM2}, taking as starting point the theories of CMC surfaces and  translating solitons. See also \cite{BuOr1,lo} for a study of invariant $\lambda$-translators and \cite{BuLo,BuOr2} for a more general linear Weingarten prescribed curvature problem in $\r3$.

Although the main properties of complete $\H$-surfaces have been exhibited in the aforementioned papers, less is known about compact $\H$-surfaces with boundary. When $\H$ is constant, there is a large literature about this topic; see e.g. \cite{lo0} for an outline of the development of this theory. This line of inquiry has been also studied for translating solitons \cite{Pyo} and $\lambda$-translators \cite{Lop}, but as far as we know, there is not a dedicated research for an arbitrary prescribed function $\H$. To fix the terminology, let $\Gamma\subset\r3$ a closed curve and $\psi:M\rightarrow\r3$ be an immersion of a surface $M$ with boundary $\partial M$. We say that $\Gamma$ is the boundary of $M$ (if the immersion $\psi$ is understood) if $\psi_{|\partial M}$ is a diffeomorphism onto $\Gamma$. When we regard $M$ immersed in $\r3$, we will not distinguish between $\Gamma$ and $\partial M$, provided that both are diffeomorphic, and we will commonly say that $M$ spans $\Gamma$. The main problem addressed in this paper is to investigate the geometry of compact $\H$-surfaces in terms of their boundary. For example, it is desirable that $M$ inherits the symmetries of $\Gamma$, or that for a fixed curve $\Gamma$, not every $\H$ is admissible for the existence of an $\H$-surface with boundary $\Gamma$.

Based on the  above examples, in this paper we study the solutions of \eqref{h1}  using ideas and techniques coming from the theory of surfaces with constant mean curvature. This adds a geometric viewpoint to the problems related with \eqref{q0}. For example, we will use   the moving plane method introduced by Alexandrov, who proved  that spheres are the only compact  constant mean curvature surfaces embedded in $\R^3$, \cite{Ale}. Later, Serrin used the same method  in the study of elliptic overdetermined problems \cite{se}. 

In virtue of Eq. \eqref{h1}, the following are three fundamental properties of $\H$-surfaces that will allow us to approach these questions: (1) $\H$-surfaces are invariant under Euclidean translations; (2) $\H$-surfaces are locally solutions of a quasilinear, elliptic PDE and in particular the maximum principle applies; and (3) any symmetry of $\H$ in $\s^2$ induces a linear isometry of $\r3$ that sends $\H$-surfaces into $\H$-surfaces. Nonetheless, the arbitrariness of $\H$ entails further incoming difficulties that have to been taken into account. For example, in general, $\H$-surfaces are not   solutions to a variational problem involving geometric measures such as the area or volume. On the other hand,   we need to take into account the loss of symmetry  of Eq. \eqref{h1}. For instance, the reflection of an $\H$-surface with respect to an arbitrary plane is not necessarily an $\H$-surface, unless this plane is a symmetry plane of $\H$. In some of the results that we will obtain, we will compare the analogue situation for CMC surfaces, emphasizing the differences, if any. 

The organization of this paper is as follows. In Section \ref{sec2}, we state the maximum and comparison principles, which are the cornerstone to obtain the majority of the forthcoming results. We analyze the existence and non-existence of closed $\H$-surfaces, proving in Prop. \ref{lemacerradas} that the condition $\H\neq0$ is necessary for the existence of a closed $\H$-surface. Nonetheless, in contrast to the CMC case, the condition $\H\not=0$ is not sufficient for the existence of a closed $\H$-surface, as revealed in Prop. \ref{propnoexistencerradas}. In Section \ref{sec3}, we investigate whether the symmetries of $\Gamma$ are inherited to a compact $\H$-surface spanning $\Gamma$.  The usual method to study this problem is the so-called Alexandrov reflection technique, and one of the major issues that we can find is that the compact surface with planar boundary may have points lying at both sides of the plane where its boundary lies, just as in the CMC case. In Section \ref{sec4},  we give sufficient conditions to guarantee that a compact $\H$-surface lies at one side of the plane containing its boundary:  Ths. \ref{t-k}  and \ref {thHsuperficieaunlado}. As a particular but important case of $\H$-surfaces, these results will be applied to $\lambda$-translators when $|\lambda|\leq 1$. Finally, in Section \ref{sec5}, we establish an estimate of the area of a compact $\H$-surface with planar boundary in terms of the height to the boundary (Th. \ref{thalturaarea}).

\section{The maximum principle for $\H$-surfaces and first consequences}\label{sec2}
 
 In this section, we state the maximum principle for $\H$-surfaces, extending the well-known situation of CMC surfaces. Let $(x_1,x_2,x_3)$ be the canonical coordinates of $\R^3$ and let $\{e_1,e_2,e_3\}$ denote the usual basis of $\R^3$. Given an $\H$-surface in $\r3$, we can locally express it as the graph of a function $u$ over each tangent plane, being $u$ a solution to an elliptic, second order quasilinear PDE. Specifically, if $u=u(x_1,x_2)$, then $u$ satisfies the equation
\begin{equation}\label{eqH}
\mathrm{div}\left(\frac{Du}{\sqrt{1+|Du|^2}}\right) = 2 \H\left(\frac{(-Du,1)}{\sqrt{1+|Du|^2}}\right),
\end{equation}
where $\mathrm{div},D$ denote respectively the divergence and gradient operators on $\R^2$. In this spirit, Eq. \eqref{eqH} can be expressed as $Q(u)=0$, where
$$Q(u)=\sum_{i,j=1}^2a_{ij}(x,u,Du)\frac{\partial^2 u}{\partial x_i\partial x_j}+b(x,u,Du),\quad x=(x_1,x_2).$$
As a consequence, the difference of two solutions of $Q(u)=0$ satisfies a linear elliptic equation. The term $b(x,u,Du)$ gathers the right-hand side of \eqref{eqH}. In order to apply the Hopf maximum principle in its interior and boundary versions, it is required that $b$ is continuously differentiable with respect to the variable $Du$ \cite[Th. 92]{gt}, hence we require the function $\H$ to be $C^1$ in Definition \ref{def1}. A geometric version of the maximum principle is the following:

\begin{lemma}[Maximum principle of $\H$-surfaces]\label{ppiomax}
Let $M_1,M_2$ be two $\H$-surfaces, possibly with smooth boundary. Assume that one of the following two conditions holds:
\begin{enumerate}
\item
There exists $p\in \mathrm{int}(M_1)\cap \mathrm{int}(M_2)$ such that $(N_1)_p=(N_2)_p$, where $N_i$ is the Gauss map of $M_i$.
\item
There exists $p\in \partial M_1\cap \partial M_2$ such that $(N_1)_p=(N_2)_p$ and $\nu_1(p)=\nu_2(p)$, where $\nu_i$ denotes the interior unit conormal of $\partial M_i$.
\end{enumerate}
If $M_1$ lies  at one side of $M_2$ around $p$, then $M_1=M_2$.
\end{lemma}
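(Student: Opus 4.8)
The plan is to reduce the geometric statement to the Hopf maximum principle for the quasilinear operator $Q$ introduced above, exploiting the two structural facts already recorded: that an $\H$-surface is locally the graph of a solution of $Q(u)=0$, and that $\H$-surfaces are invariant under Euclidean translations. First I would use the translation invariance (property (1)) to move the contact point $p$ to the origin, and then rotate so that the common tangent plane $T_pM_1=T_pM_2$ — which coincide precisely because $(N_1)_p=(N_2)_p$ — is the horizontal plane $\{x_3=0\}$, with common upward normal $e_3=(N_1)_p=(N_2)_p$. The hypothesis that the \emph{Gauss maps} agree, and not merely the tangent planes, is exactly what is needed here: it guarantees that on a common neighborhood both surfaces are graphs $x_3=u_1(x_1,x_2)$ and $x_3=u_2(x_1,x_2)$ of functions satisfying the \emph{same} equation \eqref{eqH}, with $u_i(0)=0$ and $Du_i(0)=0$. (If the orientations disagreed, the right-hand sides of \eqref{eqH} would differ in sign and the surfaces would solve different PDEs.)

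Next I would form the difference $w=u_1-u_2$. As noted above, subtracting $Q(u_1)=0$ and $Q(u_2)=0$ and using the fundamental theorem of calculus along the segment joining $(u_2,Du_2)$ to $(u_1,Du_1)$ — which is legitimate precisely because $\H\in C^1(\s^2)$, hence $b$ is $C^1$ in its arguments — expresses the identity $Q(u_1)-Q(u_2)=0$ as a homogeneous linear equation
\begin{equation*}
Lw:=\sum_{i,j}A_{ij}(x)\,\frac{\partial^2 w}{\partial x_i\partial x_j}+\sum_i B_i(x)\,\frac{\partial w}{\partial x_i}+C(x)\,w=0,
\end{equation*}
whose leading coefficients $A_{ij}$ are close to those of $Q$ at a horizontal graph, so $L$ is uniformly elliptic on a small enough neighborhood of the origin. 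The geometric hypothesis that $M_1$ lies at one side of $M_2$ around $p$ translates, after relabelling if necessary, into $w\geq 0$ near the origin, with $w(0)=0$.

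In the interior case (1) the origin is an interior point of the common domain, so $w$ is a nonnegative solution of $Lw=0$ attaining an interior minimum; the strong maximum principle then forces $w\equiv 0$ on a neighborhood of $0$, that is, $M_1=M_2$ near $p$. In the boundary case (2) the origin lies on the boundary of the domain. Since $(N_1)_p=(N_2)_p$ already forces $Du_1(0)=Du_2(0)=0$, and hence $Dw(0)=0$ and in particular $\partial w/\partial n(0)=0$, the role of the extra hypothesis $\nu_1(p)=\nu_2(p)$ is to guarantee that the two boundary curves are tangent at $p$ and that the projected domains lie on the \emph{same} side of their common boundary tangent line, so that $u_1$ and $u_2$ may be compared on a genuine common domain satisfying an interior ball condition at $0$. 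If $w$ did not vanish identically, the strong maximum principle would give $w>0$ in the interior, and then Hopf's boundary point lemma would yield $\partial w/\partial n(0)<0$, contradicting $\partial w/\partial n(0)=0$; hence again $w\equiv 0$ and $M_1=M_2$ near $p$.

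Finally I would upgrade the local coincidence to the global identity: the set of points of $M_1$ at which the two surfaces coincide on a neighborhood is open by construction and closed because, at any accumulation point, $w$ again solves a linear elliptic equation and vanishes on an open set, so unique continuation for second-order elliptic operators forces $w\equiv 0$ there too; by connectedness this set is all of $M_1$. I expect the main obstacle to be the boundary case: correctly converting the two geometric conditions $(N_1)_p=(N_2)_p$ and $\nu_1(p)=\nu_2(p)$ into the analytic data over a genuinely common domain with an interior ball at $0$, since without the conormal hypothesis the two projected domains would generically meet in a wedge with a corner at $p$, where the boundary point lemma does not apply.
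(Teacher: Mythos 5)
Your proposal is correct and is essentially the argument the paper has in mind: the paper states Lemma \ref{ppiomax} without a formal proof, relying on the preceding discussion that $\H$-surfaces are local graphs solving the quasilinear equation \eqref{eqH}, that the difference of two solutions satisfies a homogeneous linear elliptic equation (which needs $\H\in C^1$ so that $b$ is $C^1$ in $Du$), and that the interior and boundary Hopf maximum principles then apply -- precisely the reduction you carry out. Your additional care about the boundary case (conormal hypothesis giving a common domain with interior ball condition) and the open--closed globalization step are correct elaborations of details the paper leaves implicit.
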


In the theory of CMC surfaces, the maximum principle has a stronger version when applied to minimal surfaces, the so-called \emph{tangency principle}. It states that two minimal surfaces cannot have a common tangent point and one lying at one side of the other, regardless of the orientation chosen. This is because the orientation of a minimal surface can be reversed without changing the underlying PDE. In this spirit, we have the corresponding tangency principle of $\H$-surfaces for a certain type of functions $\H$.

\begin{corollary}[Tangency principle of $\H$-surfaces]\label{ppiotangencia}
Let $\H$ such that $\H(-x)=-\H(x)$, $x\in\s^2$. If $M_1,M_2$ are two $\H$-surfaces and $M_1$ lies at one side of $M_2$, then $M_1=M_2$.
\end{corollary}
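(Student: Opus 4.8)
The plan is to reduce the statement to the maximum principle of Lemma \ref{ppiomax}, using the hypothesis $\H(-x)=-\H(x)$ to reverse orientations at will, exactly as the orientation-reversibility of the minimal surface equation is used in the classical tangency principle. The crucial observation is that oddness of $\H$ is precisely the condition under which the class of $\H$-surfaces is closed under reversal of the Gauss map. Indeed, if $M$ is an $\H$-surface with Gauss map $N$ and I set $\widetilde{N}=-N$, the mean curvature computed with respect to $\widetilde{N}$ equals $-H_M$; since $-H_M(p)=-\H(N_p)=\H(-N_p)=\H(\widetilde{N}_p)$ for every $p$, the surface $M$ endowed with $\widetilde{N}$ is again an $\H$-surface. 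This is the only place where the hypothesis on $\H$ enters.

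First I would extract a contact configuration. Because $M_1$ lies at one side of $M_2$, the two surfaces share a common point $p$ around which $M_1$ stays, say, in the closure of one of the two regions determined by $M_2$; one-sidedness then forces the two tangent planes to agree at $p$, so that $(N_1)_p=\pm(N_2)_p$. In the interior case this is all that is needed; in the boundary case $p\in\partial M_1\cap\partial M_2$ and the interior conormals coincide, $\nu_1(p)=\nu_2(p)$, since both are determined by the common tangent plane and the common direction of the boundary and do not depend on the choice of orientation of the surface.

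Next I would split into two cases according to the sign above. If $(N_1)_p=(N_2)_p$, I would apply Lemma \ref{ppiomax} directly and conclude $M_1=M_2$. If instead $(N_1)_p=-(N_2)_p$, I would replace the orientation of $M_2$ by $\widetilde{N}_2=-N_2$; by the observation of the first paragraph, $(M_2,\widetilde{N}_2)$ is still an $\H$-surface, it is the same point set as $M_2$ so that $M_1$ still lies at one side of it, and now $(N_1)_p=(\widetilde{N}_2)_p$. Since reversing the Gauss map leaves the interior conormal unchanged, the boundary hypothesis persists in the boundary case. Applying Lemma \ref{ppiomax} to $M_1$ and $(M_2,\widetilde{N}_2)$ yields $M_1=M_2$.

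The step requiring the most care is verifying that the contact configuration genuinely supplies the hypotheses of Lemma \ref{ppiomax} after the possible reversal: one must check that ``lying at one side'' is a notion intrinsic to the ambient position of the two point sets and hence insensitive to the choice of $N_2$, and that flipping $N_2$ does not disturb the conormal matching along the boundary. Once these points are settled, the oddness of $\H$ does all the remaining work.
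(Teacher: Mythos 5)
Your proposal is correct and takes essentially the same route as the paper: both arguments reduce the statement to the maximum principle of Lemma \ref{ppiomax}, using the oddness $\H(-x)=-\H(x)$ to show that reversing the Gauss map of one surface still yields an $\H$-surface when the normals are opposite at the contact point. The only cosmetic difference is that you reverse the orientation of $M_2$ whereas the paper reverses that of $M_1$; your additional checks (that one-sidedness and the interior conormal are independent of the choice of orientation) are correct and merely make explicit what the paper leaves implicit.
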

\begin{proof} Suppose that $M_1$ lies at one side of $M$ around some common (interior or boundary) point $p_0$. If $(N_1)_{p_0}=(N_2)_{p_0}$, then we apply the maximum principle of $\H$-surfaces to get the result. Otherwise, that is, when $(N_1)_{p_0}=-(N_2)_{p_0}$, we reverse the orientation of $M_1$. Then $M_1$ is also an $\H$-surface with this orientation because its mean curvature $H'_{M_1}$ satisfies that for all $p\in M_1$, we have 
$$H'_{M_1}(p)=-H_{M_1}(p)=-\H((N_1)_p)=\H(-(N_1)_p).$$
Then we are again under the conditions to apply the maximum principle of $\H$-surfaces around $p_0$, obtaining the result.
\end{proof}

Some examples of functions $\H$  satisfying the condition $\H(-x)=-\H(x)$ are $\H(x)=\langle x,v\rangle^n$, for a fixed $v\in\s^2$ and $n$ an odd number ($n=1$ corresponds with translating solitons) and $\H(x)=\sin(\langle x,v\rangle)$, for a fixed $v\in\s^2$.

The following result is the classical mean curvature comparison principle but adapted to $\H$-surfaces.

\begin{lemma}[Comparison principle of $\H$-surfaces]\label{ppiocomp}
Let   $M_i$ be $\H_i$-surfaces, $i=1,2$. Assume that there exists $p\in M_1\cap M_2$ such that $(N_1)_p=(N_2)_p$. If   $M_1$ lies locally above $M_2$ around $p$, then, $\H_1((N_1)_p)\geq\H_2((N_2)_p)$.
\end{lemma}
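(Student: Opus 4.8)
The plan is to reduce the comparison principle to the graph equation (\ref{eqH}) and then invoke the Hopf maximum principle for the difference of two solutions of a quasilinear elliptic PDE, exactly as one does for CMC surfaces but keeping track of the two (possibly different) prescribed functions. First I would fix the common point $p\in M_1\cap M_2$ with $(N_1)_p=(N_2)_p$. After a rigid motion I may assume $p$ is the origin and the common tangent plane is the horizontal plane $x_3=0$, with the common unit normal pointing upward, i.e. equal to $e_3$. Since both $M_i$ have the same normal $e_3$ at $p$, each $M_i$ is locally a graph $x_3=u_i(x_1,x_2)$ over a neighbourhood of the origin in this tangent plane, with $u_i(0)=0$ and $Du_i(0)=0$. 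The hypothesis that $M_1$ lies locally above $M_2$ around $p$ translates into $u_1\geq u_2$ near the origin.

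Next I would record the PDEs satisfied by the $u_i$. Each $M_i$ is an $\H_i$-surface, so by (\ref{eqH}) the graph function $u_i$ satisfies
\begin{equation*}
\mathrm{div}\left(\frac{Du_i}{\sqrt{1+|Du_i|^2}}\right)=2\,\H_i\left(\frac{(-Du_i,1)}{\sqrt{1+|Du_i|^2}}\right).
\end{equation*}
The left-hand side is the mean curvature operator $\mathcal M$, and with the chosen orientation the standard sign conventions give $\mathcal M(u_i)=2H_{M_i}$ computed with respect to the upward normal. The key point is to argue by contradiction: suppose $\H_1((N_1)_p)<\H_2((N_2)_p)$. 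Since $(N_1)_p=(N_2)_p=e_3$ corresponds to $Du_i(0)=0$, evaluating both equations at the origin and using continuity of $\H_1,\H_2$, the comparison of the two right-hand sides near the origin would force $\mathcal M(u_1)<\mathcal M(u_2)$ in a neighbourhood, which says $u_1$ is a strict elliptic subsolution relative to $u_2$ in the appropriate sense.

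From here the mechanism is the Hopf argument already set up in the excerpt. Writing the difference $w=u_1-u_2$, the two equations subtract to give a homogeneous linear elliptic equation for $w$ with no zeroth-order term (the quasilinear operator $Q$ from the excerpt has difference of solutions satisfying a linear elliptic equation, precisely the property recorded after (\ref{eqH})), while the mismatch of the prescribed functions contributes a sign-definite term. Because $w\geq 0$ and $w(0)=0$, the interior strong maximum principle forces $w\equiv 0$ near the origin, i.e. $M_1=M_2$ locally; but then their mean curvatures agree at $p$, so $\H_1((N_1)_p)=\H_2((N_2)_p)$, contradicting the strict inequality assumed. Therefore $\H_1((N_1)_p)\geq\H_2((N_2)_p)$, as claimed. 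I expect the only delicate step to be the bookkeeping of orientations and signs: one must check that ``$M_1$ lies locally above $M_2$'' together with the common upward normal yields $u_1\geq u_2$ (and not the reverse), and that the mean curvature of each graph is $+\H_i(N_i)$ rather than its negative for the chosen normal, since a sign error here would flip the inequality. Once the correct sign conventions are pinned down, the elliptic comparison is routine.
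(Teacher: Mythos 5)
Your proof is correct. Note first that the paper never proves this lemma at all: it is stated as ``the classical mean curvature comparison principle adapted to $\H$-surfaces,'' so there is no argument of record to compare against; what you have supplied is a legitimate proof of the classical fact. Your route (graphs over the common tangent plane, then contradiction via the strong maximum principle applied to $w=u_1-u_2$) works, and you correctly flag the two points that need care: the sign conventions implicit in \eqref{eqH}, and the fact that the linearized operator has no zeroth-order term because the equation depends only on $Du$ and $D^2u$ (the splitting $\H_1(N[u_1])-\H_2(N[u_2])=[\H_1(N[u_1])-\H_1(N[u_2])]+[\H_1(N[u_2])-\H_2(N[u_2])]$, with the first bracket absorbed into first-order coefficients and the second giving your sign-definite term, is the bookkeeping that makes this precise). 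However, the argument is heavier than needed: a one-point inequality does not require Hopf's lemma or a contradiction. Since $w\geq 0$, $w(0)=0$ and $Dw(0)=0$, the origin is a local minimum of $w$, so $D^2w(0)\geq 0$ and in particular $\Delta u_1(0)\geq\Delta u_2(0)$; at a point where the gradient vanishes the mean curvature operator reduces to the Laplacian, so \eqref{eqH} gives directly
$$
2\,\H_1\bigl((N_1)_p\bigr)=\mathcal{M}(u_1)(0)=\Delta u_1(0)\;\geq\;\Delta u_2(0)=\mathcal{M}(u_2)(0)=2\,\H_2\bigl((N_2)_p\bigr),
$$
which is the conclusion. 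The strong maximum principle machinery is what one needs for the equality/rigidity statement of Lemma \ref{ppiomax} (two $\H$-surfaces touching must coincide), not for the pointwise curvature comparison; your proof is sound but buys nothing extra here, at the cost of the continuity argument and the contradiction scaffolding.
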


A fundamental property that will be applied throughout the paper is the following. Let $v\in\s^2$ and assume that $\H(v)=0$. Then it is immediate that any plane oriented by $v$ is an $\H$-surface regardless of whether $\H$ is not identically $0$. 
We show next that in the same class of $\H$-surfaces, one may have different types of examples that widely differ one from the other. Fix some $\varepsilon\in(0,1)$ and let us define
$$
f(t)=\left\lbrace
\begin{array}{lll}
1,&\mathrm{if}&t\leq0\\
g(t),&\mathrm{if}&t\in[0,\varepsilon]\\
0,&\mathrm{if}&t\geq\varepsilon,
\end{array}\right.
$$
where $g(0)=1,g(\varepsilon)=0$, $g$ is decreasing and such that $f$ is of class $C^\infty$. Define $\H\in C^1(\s^2)$ by  $\H(x)=f(\langle x,e_3\rangle)$. Some $\H$-surfaces are the following:
\begin{enumerate}
\item The upper half-sphere $\s^2_+$ (or any open subset of it)   with the downwards orientation.
\item Any plane oriented by $v$, being $v\in\s^2$ such that  $\langle v,e_3\rangle\geq\varepsilon$. 
\end{enumerate}
Note that the coexistence of both planes and half-spheres in the same class of $\H$-surfaces does not contradict neither the maximum nor the mean curvature comparison principles. Indeed, if they are tangent at some point their unit normals are necessarily opposite; otherwise, they are no longer $\H$-surfaces since they do not fulfill \eqref{h1}.

We now derive some direct consequences of the maximum principle. Given $\H\in C^1(\s^2)$, we are interested in the existence of closed (compact without boundary) $\H$-surfaces. When $\H$ is a positive constant there are such closed examples, namely, spheres. This makes a huge difference when $\H$ is identically zero, since it is well-known the non-existence of closed minimal surfaces. The following result proves that it is sufficient for $\H$ to vanish at just one point of $\s^2$ in order to forbid the existence of a closed $\H$-surface.
\begin{proposition}\label{lemacerradas}
Let $\H\in C^1(\s^2)$. If $\H$ vanishes at some point, then there are no closed $\H$-surfaces in $\r3$.
\end{proposition}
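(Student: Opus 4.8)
The plan is to argue by contradiction: suppose $M$ is a closed $\H$-surface and that $\H(v)=0$ for some $v\in\s^2$. The crucial structural fact, already noted above, is that $\H(v)=0$ turns every plane orthogonal to $v$ and oriented by $v$ into an $\H$-surface. I would therefore place such a plane tangent to $M$ and invoke the maximum principle (Lemma \ref{ppiomax}) to force $M$ to coincide with it, which is absurd since a plane is non-compact while $M$ is closed. The natural tangency points come from the height function $h=\langle\cdot,v\rangle$, which by compactness attains a maximum at some $p_0\in M$ and a minimum at some $p_1\in M$; at both points the tangent plane is orthogonal to $v$, so $N(p_0),N(p_1)\in\{v,-v\}$, and $M$ lies locally on one side of the plane through that point (below it at $p_0$, above it at $p_1$). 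One may assume $M$ connected, applying the argument to the component carrying the extremum otherwise.

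The next step is a bookkeeping of orientations. If $N=v$ at $p_0$ or at $p_1$, then the plane $\Pi$ through that point oriented by $v$ is an $\H$-surface sharing the Gauss map with $M$ at an interior point, while $M$ lies on one side of $\Pi$ locally; Lemma \ref{ppiomax} then yields $M=\Pi$, the desired contradiction. So the only case requiring work is $N(p_0)=N(p_1)=-v$, where the available $\H$-plane (oriented by $v$) has normal opposite to that of $M$, and the maximum principle cannot be applied directly.

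To dispose of this remaining case I would use the orientation-independent mean curvature vector $\vec{H}=H_M N=\H(N)\,N$. Since $M$ lies below its tangent plane at the maximum $p_0$, one has $\langle\vec H(p_0),v\rangle\le 0$, and since it lies above at the minimum $p_1$, $\langle\vec H(p_1),v\rangle\ge 0$. Substituting $N(p_0)=N(p_1)=-v$ gives $\H(-v)\ge 0$ and $\H(-v)\le 0$, whence $\H(-v)=0$. But then planes oriented by $-v$ are themselves $\H$-surfaces, and the plane through $p_0$ oriented by $-v$ now shares the normal $-v$ with $M$ and has $M$ on one side of it; Lemma \ref{ppiomax} again forces $M$ to be this plane, contradicting compactness.

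I expect this orientation mismatch case to be the main obstacle, precisely because it reflects the loss of symmetry of \eqref{h1}: the hypothesis $\H(v)=0$ says nothing a priori about $\H(-v)$, so one cannot simply exchange $v$ and $-v$ as in the minimal case. The sign-of-$\vec H$ computation is exactly what recovers the missing datum $\H(-v)=0$ in the configuration where it is needed. The only routine points to verify are that $M$ carries a global Gauss map so that \eqref{h1} is meaningful, and that the conclusion ``$M$ is a plane'' is genuinely impossible for a compact boundaryless surface.
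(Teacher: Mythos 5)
Your proof is correct and takes essentially the same route as the paper's: both arguments work at the extremum points of the height function $\langle\cdot,v\rangle$, kill the case $N=v$ by the maximum principle against a horizontal plane (an $\H$-surface since $\H(v)=0$), and in the case $N=-v$ at both extrema extract sign information on $\H(-v)$ --- your mean curvature vector computation is precisely the paper's comparison principle applied to the tangent plane. The only cosmetic difference is bookkeeping: the paper derives the strict inequality $\H(-v)<0$ at the lowest point (ruling out equality by the maximum principle there) and contradicts it at the highest point, whereas you first conclude $\H(-v)=0$ from the two inequalities and then apply the maximum principle once more with the plane oriented by $-v$.
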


\begin{proof}
By contradiction, assume that $M$ is a closed $\H$-surface. Without loss of generality, we can assume that $\H(e_3)=0$. Let $\Pi$ be the plane of equation $x_3=0$, which we orient by $e_3$.  After a vertical translation, and by compactness of $M$, we can assume that $M$ is contained in the half-space $\{x\in\R^3:x_3\geq0\}$ and that $M\cap\Pi\not=\emptyset$. Let  $p_0\in M\cap \Pi$. Since $p_0$ is an interior point of $M$, then  $N_{p_0}=\pm e_3$.

If $N_{p_0}=e_3$, we apply the maximum principle for $\H$-surfaces between the plane $\Pi$ and $M$ and we conclude that $M\subset\Pi$. This is a contradiction because  $M$ is closed. Therefore, $N_{p_0}=-e_3$. We now consider $\Pi$ oriented with the vector $-e_3$. The comparison principle of $\H$-surfaces yields $\H(-e_3)\leq0$. Moreover, $\H(-e_3)<0$ for if $\H(-e_3)=0$ we contradict the maximum principle.

By compactness again, let be $p_1\in M$ a point of maximum height to $\Pi$ and $P$ the affine tangent plane to $M$ at $p_1$. At this point we have $N_{p_1}=\pm e_3$. If $N_{p_1}=e_3$ we contradict the maximum principle. If $N_{p_1}=-e_3$, then $M$ lies locally above $P$ around $p_1$ but $\H(-e_3)<0$, which contradicts the comparison principle. In any case, we arrive to the desired contradiction.
\end{proof}

From this result, the condition $\H\not=0$ is necessary for the existence of closed $\H$-surfaces. In the same fashion as in the case that $\H$ is a non-zero constant, it would be expectable  that the condition $\H\not=0$, say $\H>0$, is also sufficient for the existence of a closed $\H$-surface. Nevertheless, the following result exhibit examples of positive functions $\H$ for which there are no closed $\H$-surfaces.

\begin{proposition}\label{propnoexistencerradas}
Let be $\H$ such that $\H(x)=h_0(x)+\lambda$, where $\lambda\in\R$ and $h_0\in C^1(\s^2)$ is not identically zero. If there is $v\in\s^2$ such that   $h_0(x)\langle x,v\rangle\geq0$ for all $x\in\s^2$, then  there are no closed $\H$-surfaces. 
\end{proposition}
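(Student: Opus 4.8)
The plan is to separate two regimes according to whether $\H$ has a zero on $\s^2$, handling the first by the already proved Proposition~\ref{lemacerradas} and the second by a global flux identity. First I would normalize: a rotation $R\in SO(3)$ sends an $\H$-surface to an $(\H\circ R^{-1})$-surface and preserves the hypothesis with $v$ replaced by $Rv$, so I may assume $v=e_3$. Suppose, for contradiction, that $M$ is a closed $\H$-surface. If $\H(x_0)=0$ for some $x_0\in\s^2$, Proposition~\ref{lemacerradas} already forbids closed $\H$-surfaces, a contradiction; hence I may assume $\H$ never vanishes. Since $\s^2$ is connected and $\H$ is continuous, either $\H>0$ or $\H<0$ everywhere, and this sign-definite regime is the crux.

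In this regime I would bring in the two classical identities valid for any closed immersed surface, $\int_M \vec H\,dA=0$ (for the mean curvature vector $\vec H$) and $\int_M N\,dA=0$, both consequences of the divergence theorem (equivalently, of the first variation of area under constant translations). Since $\vec H$ is a fixed multiple of $H_M N=\H(N)N=(h_0(N)+\lambda)N$, taking the $e_3$-component and using the second identity to kill the $\lambda$-term leaves
\[
\int_M h_0(N_p)\,\langle N_p,e_3\rangle\,dA=0 .
\]
By hypothesis (with $v=e_3$) the integrand is continuous and nonnegative, so it vanishes identically: $h_0(N_p)\langle N_p,e_3\rangle=0$ for every $p\in M$.

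To turn this into a contradiction I would prove that the Gauss map $N$ is onto $\s^2$, deriving this from the comparison principle in the present sign-definite situation. Given $w\in\s^2$, let $q$ maximize the height $\langle\psi,w\rangle$ on $M$; then $T_qM=w^{\perp}$, so $N_q=\pm w$, and $M$ lies on the $-w$ side of the supporting plane $P$ through $\psi(q)$ with normal $w$. Comparing $M$ and $P$ at $q$ via Lemma~\ref{ppiocomp}: if $N_q=w$ then $P$ lies above $M$ and $\H(w)\le 0$, whereas if $N_q=-w$ then $M$ lies above $P$ and $\H(-w)\ge 0$. When $\H>0$ the first alternative is impossible, so $N_q=-w$ and thus $-w\in N(M)$; letting $w$ run over $\s^2$ gives $N(M)=\s^2$ (the case $\H<0$ is symmetric). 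Consequently $h_0(x)\langle x,e_3\rangle=0$ for all $x\in\s^2$, and since $\langle x,e_3\rangle\neq 0$ on the dense set $\s^2\setminus\{\langle x,e_3\rangle=0\}$, the function $h_0$ vanishes there and hence everywhere by continuity, contradicting $h_0\not\equiv 0$.

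I expect the sign-definite case to be the only real obstacle. Proposition~\ref{lemacerradas} gives nothing there, and the naive approach through the highest and lowest points of $M$ does not close: the round sphere with its inward orientation realizes exactly the configuration (top normal $-e_3$, bottom normal $+e_3$) that is compatible with the comparison principle. The decisive extra ingredient is the flux identity, whose nonnegative integrand forces the Gauss image to avoid $\{h_0\neq 0\}$; the remaining point is the orientation-sensitive surjectivity of $N$, which I prefer to extract from Lemma~\ref{ppiocomp} precisely because $\H$ is sign-definite, rather than invoke as an external fact.
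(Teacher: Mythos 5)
Your proof is correct, and its engine is the same as the paper's: the paper also applies the divergence theorem to $f(p)=\langle p,v\rangle$ (via $\Delta f=2H\langle N,v\rangle$), obtains $0=\int_M h_0(N)\langle N,v\rangle\,dM+\lambda\int_M\langle N,v\rangle\,dM$, discards the $\lambda$-term because $M$ is closed, and then argues from the nonnegativity of the remaining integrand. Where you genuinely add something is at the very end. The paper concludes directly: the first integral vanishes, but $h_0(N)\langle N,v\rangle\geq 0$ and $h_0\not\equiv 0$, ``arriving to the desired contradiction.'' Strictly, the vanishing of the integral only gives $h_0(N_p)\langle N_p,v\rangle=0$ for every $p\in M$; to contradict $h_0\not\equiv 0$ on $\s^2$ one must also know that the Gauss image $N(M)$ meets the (possibly small) open set where $h_0(x)\langle x,v\rangle>0$. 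You supply exactly this missing step: first the dichotomy via Proposition \ref{lemacerradas} (if $\H$ vanishes anywhere, no closed $\H$-surface exists and you are done), and then, in the sign-definite regime, the surjectivity of $N$ extracted from Lemma \ref{ppiocomp} at the maximum point of each height function, where $\H>0$ forces $N=-w$ there. This is not a pedantic addition: for an immersed closed surface the oriented Gauss map need not be surjective in general (if $N$ omits a value it is null-homotopic, so the degree argument $\deg N=\chi(M)/2$ settles only the case $\chi(M)\neq 0$, and immersed genus-one examples can omit an open cap), so some use of the sign of $\H$ --- available precisely thanks to Proposition \ref{lemacerradas} --- is really needed to justify the paper's last sentence. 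In short: same integral identity, same structure, but your version makes rigorous a step the paper leaves implicit.
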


\begin{proof}
By contradiction, assume that $M$ is a closed $\H$-surface with $\H=h_0+\lambda$  defined under the hypothesis of the proposition. The function $f\in C^\infty(M)$ defined by $f(p)=\langle p,v\rangle$ satisfies $\Delta f=2H\langle N,v\rangle$, where  $\Delta$ is the Laplacian in $M$. Since $\partial M=\varnothing$, the divergence theorem gives
$$
0=\int_M \H(N)\langle N,v\rangle\, dM=\int_M h_0(N)\langle N,v\rangle\, dM+\lambda\int_M\langle N,v\rangle\, dM.
$$
The second integral vanishes since $M$ is closed. Therefore, the first integral must vanish as well, but $h_0(N)\langle N,v\rangle\geq0$ and $h_0$ is not identically zero by hypothesis, arriving to the desired contradiction.
\end{proof}

Examples of positive functions $\H\in C^1(\s^2)$ under the hypothesis of Prop. \ref{propnoexistencerradas} are the following:
\begin{enumerate}
\item   Let $v\in\s^2$. For $\lambda>1$ and $n$ an odd natural number, define  $\H(x)=\langle x,v\rangle^n+\lambda$.  
\item   Let $v\in\s^2$. For $\lambda>2$, define $\H(x)=\langle x,v\rangle(1+\sin(\langle x,v\rangle))+\lambda$. 
\end{enumerate}

The last two results exhibit that under mild assumptions on $\H$, there do not exist closed $\H$-surfaces. Therefore, for these prescribed functions, any compact $\H$-surface has non-empty boundary. This makes the study of compact $\H$-surfaces with boundary not only natural, but necessary.

Given a simple closed curve $\Gamma$, our first approach to this study is to derive some restrictions for $\H$ in terms of the geometry of $\Gamma$ for the existence of a compact $\H$-surface spanning $\Gamma$. In the theory of CMC surfaces, it is known that given a closed curve $\Gamma\subset\R^3$, not all $H\in\R$ are allowed to have $H$-surfaces spanning $\Gamma$. This was proved initially by Heinz \cite{hei} and generalized later using a flux formula for CMC surfaces: see details in \cite{lo0}. For example, if $\Gamma$ is a closed simple curve, then $|H|\leq L/(2\mbox{area}(\Omega))$, where $L$ is the length of $\Gamma$ and $\Omega$ is the planar   domain bounded by $\Gamma$. As far as the authors know, there is no an analogue to the flux formula in the theory of $\H$-surfaces. Nonetheless, we are able to prove a similar necessary condition in case that the surface is a graph. 

\begin{proposition}\label{proprelacionLareaminH}
Let $\Gamma$ be a closed simple curve contained in a plane $\Pi=v^\bot$, $v\in\s^2$. If $\H>0$ is a $C^1$ function defined in $\s^2$, then a necessary condition for the existence of a compact $\H$-graph spanning $\Gamma$ is 
$$
\H_{\min}:=\min\{\H(x):x\in\s^2\} \leq\frac{L}{2\,\mathrm{area}(\Omega)},
$$
where $L$ is the length of $\Gamma$ and $\Omega\subset\Pi$ is the domain bounded by $\Gamma$. 
\end{proposition}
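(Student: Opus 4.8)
The plan is to adapt the classical flux (Heinz-type) estimate for CMC graphs to the prescribed function $\H$, by applying the divergence theorem to the graph equation \eqref{eqH}. First I would normalize the setting: choose Euclidean coordinates so that $v=e_3$ and $\Pi=\{x_3=0\}$, so that a compact $\H$-graph spanning $\Gamma$ is the graph $x_3=u(x_1,x_2)$ of a function $u\in C^1(\overline\Omega)$ over the planar domain $\Omega\subset\Pi$ enclosed by $\Gamma$, where $\partial\Omega=\Gamma$ has length $L$. Being an $\H$-surface, $u$ solves \eqref{eqH}, that is,
$$\mathrm{div}\left(\frac{Du}{\sqrt{1+|Du|^2}}\right)=\pm\,2\,\H(N),$$
where the sign is determined by whether the orientation $N$ that realizes $M$ as an $\H$-surface is the upward or the downward normal of the graph.

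Then I would integrate this identity over $\Omega$ and apply the divergence theorem to the field $X:=Du/\sqrt{1+|Du|^2}$. On the one hand, the boundary flux obeys the elementary bound $|X|=|Du|/\sqrt{1+|Du|^2}<1$, so that $|\langle X,\eta\rangle|\le|X|<1$ for the outward unit conormal $\eta$ of $\partial\Omega$, giving
$$\left|\int_\Omega\mathrm{div}(X)\,dA\right|=\left|\int_{\partial\Omega}\langle X,\eta\rangle\,ds\right|\le\int_{\partial\Omega}ds=L.$$
On the other hand, since $\H>0$ we have $\H(N)\ge\H_{\min}>0$ pointwise, hence $\int_\Omega 2\,\H(N)\,dA\ge 2\,\H_{\min}\,\mathrm{area}(\Omega)$, and this integral is in particular strictly positive.

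Finally I would assemble the two estimates. Positivity of $\int_\Omega 2\,\H(N)\,dA$ lets me replace it by $\left|\int_\Omega\mathrm{div}(X)\,dA\right|$, which absorbs the orientation sign, so that
$$2\,\H_{\min}\,\mathrm{area}(\Omega)\le\int_\Omega 2\,\H(N)\,dA=\left|\int_\Omega\mathrm{div}(X)\,dA\right|\le L,$$
and this rearranges to $\H_{\min}\le L/(2\,\mathrm{area}(\Omega))$, as claimed.

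I do not anticipate a genuine obstacle: this is essentially the flux computation behind Heinz's CMC bound, and the hypothesis $\H>0$ is exactly what is needed both to guarantee $\H_{\min}>0$ and to pass to the absolute value, thereby discarding the ambiguity in the orientation of the graph that produced the sign $\pm$. The only details requiring mild care are the regularity needed for the divergence theorem (granted by smoothness of $M$ up to its boundary) and the identification of $\partial\Omega$ with $\Gamma$, which is what lets the conormal flux be controlled by the length $L$ of $\Gamma$.
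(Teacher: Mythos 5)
Your proof is correct and follows essentially the same route as the paper's: integrate the graph equation \eqref{eqH} over $\Omega$, apply the divergence theorem, bound the boundary flux by $L$ using $|Du|/\sqrt{1+|Du|^2}<1$, and bound the left-hand side below by $2\,\H_{\min}\,\mathrm{area}(\Omega)$. Your explicit treatment of the orientation sign $\pm$ (upward versus downward normal), absorbed via the absolute value thanks to $\H>0$, is a small point of extra care that the paper handles implicitly by writing \eqref{eqH} with the upward normal.
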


\begin{proof} By \eqref{eqH} and the divergence theorem, we have 
$$\int_\Omega 2(\H\circ N)\, d\Omega=\int_\Omega\mbox{div}\left(\frac{(-Du,1)}{\sqrt{1+|Du|^2}}\right)\, d\Omega= \int_{\partial\Omega}
\langle \frac{(-Du,1)}{\sqrt{1+|Du|^2}},\mathbf{n}\rangle\, ds\leq L,$$
where $\mathbf{n}$ is the unit outward normal vector of $\partial\Omega$ as planar curve of $\Pi$. Since $\H>0$, the first term in the above identities can be estimated from below by $2\H_{min}\int_\Omega\, d\Omega$, obtaining the result.
\end{proof} 

Going back to Prop. \ref{lemacerradas},  one may expect that if $\H(v)=0$,  the class of $\H$-surfaces behaves as minimal surfaces. The following result emphasizes this relationship by generalizing the  \emph{convex hull property  of minimal surfaces}, which states that any compact minimal surface must lie inside the convex hull of its boundary. 

\begin{proposition}\label{thconvexhull}
Let $\Gamma$ be a closed   curve in $\r3$. Let $\H\in C^1(\s^2)$ such that  $\H(\pm v)=0$  for some $v\in\s^2$. If $M$ is a compact $\H$-surface with boundary $\Gamma$, then  $M$ is contained in the slab
$$
\{x\in\r3:\mu_1(v)\leq \langle x,v\rangle\leq\mu_2(v)\},$$
where 
$$\mu_1= \min\{\langle p,v\rangle:\ p\in\Gamma\},\quad \mu_2(v)=\max\{\langle p,v\rangle:\ p\in\Gamma\}.
$$
\end{proposition}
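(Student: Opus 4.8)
The plan is to use the family of planes orthogonal to $v$ as barriers, exploiting the hypothesis $\H(\pm v)=0$. The crucial observation, already noted in the discussion preceding Prop. \ref{lemacerradas}, is that any plane $\Pi_c=\{x\in\r3:\langle x,v\rangle=c\}$, oriented by either $v$ or $-v$, is an $\H$-surface: its mean curvature is zero and its constant Gauss map is $\pm v$, where $\H$ vanishes by hypothesis. Thus these planes play exactly the role that affine planes play for minimal surfaces in the classical convex hull property, but now only in the single direction $v$, which is why the conclusion is confinement to a slab rather than to the full convex hull.

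I would argue by contradiction. Consider the height function $f\colon M\to\R$, $f(p)=\langle p,v\rangle$. By compactness $f$ attains a maximum $m$ on $M$, and on the boundary its maximum is exactly $\mu_2(v)$. Suppose $m>\mu_2(v)$; then the maximum of $f$ over $M$ is strictly larger than its maximum over $\Gamma=\partial M$, so it is attained at an interior point $p_1\in\mathrm{int}(M)$. The affine tangent plane to $M$ at $p_1$ is $\Pi:=\{\langle x,v\rangle=m\}$, and $M$ lies locally in the half-space $\{\langle x,v\rangle\leq m\}$, that is, at one side of $\Pi$. Since $p_1$ is a critical point of $f$, the Gauss map satisfies $N_{p_1}=\pm v$.

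Now I would orient $\Pi$ so that its unit normal agrees with $N_{p_1}$: if $N_{p_1}=v$ take the orientation $v$, and if $N_{p_1}=-v$ take the orientation $-v$. In either case $\Pi$ is an $\H$-surface (here both sign conditions $\H(v)=0$ and $\H(-v)=0$ are used, since a priori we do not control the sign of $N_{p_1}$), and $M$ and $\Pi$ are tangent at the interior point $p_1$ with coincident Gauss maps and with $M$ at one side. The maximum principle of $\H$-surfaces (Lemma \ref{ppiomax}) then forces $M$ to coincide with $\Pi$ near $p_1$, and by connectedness (unique continuation) $M$ is contained in $\Pi$. But then $\Gamma=\partial M\subset\Pi$ lies at height $m>\mu_2(v)$, contradicting the definition of $\mu_2(v)$. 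Hence $m\leq\mu_2(v)$. The lower bound $\langle x,v\rangle\geq\mu_1(v)$ follows by the identical argument applied to the minimum of $f$, or, more economically, by replacing $v$ with $-v$ and re-running the upper-bound estimate, since $\H(\pm(-v))=0$ and $\mu_2(-v)=-\mu_1(v)$.

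The main obstacle, and the reason for requiring $\H(\pm v)=0$ rather than merely $\H(v)=0$, is the lack of control on the orientation of $M$ at the extreme point: the Gauss map there could be $v$ or $-v$, and we need a comparison plane that is an $\H$-surface for whichever case occurs. The only other point requiring care is the standard one of verifying that the extreme point is genuinely interior, which is immediate once $m>\mu_2(v)$ because boundary values of $f$ never exceed $\mu_2(v)$. Note that no strict sign information is needed: the planes are exact $\H$-surfaces, so the plain maximum principle suffices and no appeal to the comparison principle is required.
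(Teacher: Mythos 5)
Your proof is correct and follows essentially the same route as the paper: take the highest point in the direction $v$, note it must be interior with Gauss map $\pm v$, orient the affine tangent plane accordingly so that it is an $\H$-surface (this is where both conditions $\H(v)=0$ and $\H(-v)=0$ are needed), and invoke the maximum principle of Lemma \ref{ppiomax} to force $M$ into that plane, contradicting the location of $\Gamma$. The only cosmetic difference is that you spell out the lower bound via the substitution $v\mapsto -v$, which the paper leaves implicit.
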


\begin{proof}
After a change of coordinates, we assume $v=e_3$. Arguing by contradiction, assume that $M$ contains points whose third coordinate $x_3$ is bigger than $\mu_2(e_3)$. Consider $p\in M$ the highest point of $M$, which in particular satisfies $x_3(p)>\mu_2(e_3)$, and hence $p$ is an interior point of $M$. Moreover, $N_{p}=e_3$ or $N_p=-e_3$. We orient the (affine) tangent plane $T_pM$ by the vector $N_p$. Since $\H(N_p)=0$, then $T_pM$ is an $\H$-surface and the maximum principle implies that $M$ is included in $T_pM$: this a contradiction because $\Gamma\cap T_pM=\emptyset$.   
\end{proof}

\section{The Alexandrov method for $\H$-surfaces}\label{sec3}

In this section, we investigate how the geometry of a simple closed curve $\Gamma$ determines the shape of a compact $\H$-surface that spans it. A first interesting case is to investigate whether the symmetries of the boundary curve $\Gamma$ are inherited to a compact $\H$-surface.  The main tool for achieving this purpose is the so-called Alexandrov reflection technique. A major difference with the CMC setting is that in the case of an arbitrary prescribed function $\H$, we have to take into account the loss of symmetries of the resulting PDE fulfilled by the $\H$-surfaces. As a matter of fact, the reflection about an arbitrary plane no longer sends an $\H$-surface into an $\H$-surface. In this spirit, the third hypothesis of the following result is paramount.

\begin{theorem}\label{t-s}
Let $\Gamma$ be a closed simple curve contained in a plane $\Pi$. Assume that:
\begin{enumerate}
\item $\Gamma$ is symmetric with respect to the reflection about a plane $P$ orthogonal to $\Pi$,
\item $P$ separates $\Gamma$ into two graphs over the line $\Pi\cap P$.
\item $\H$ is invariant  with respect to the reflection about the vector plane parallel to $P$.
\end{enumerate}
If $M$ is a compact, embedded $\H$-surface spanning $\Gamma$ and $M$ lies at one side of $\Pi$, then $M$ is symmetric with respect to $P$.
\end{theorem}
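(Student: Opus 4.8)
The plan is to run the Alexandrov reflection (moving plane) method with the family of planes orthogonal to $\Pi$ and parallel to $P$, applying the maximum principle of Lemma \ref{ppiomax} at the instant of first contact. Choose coordinates so that $\Pi=\{x_3=0\}$, $M\subset\{x_3\ge0\}$ and $P=\{x_1=0\}$; then $P_t:=\{x_1=t\}$ is the family of reflection planes and $R_t(x_1,x_2,x_3)=(2t-x_1,x_2,x_3)$ the associated reflections. First I would establish the preliminary fact that \emph{every $R_t$ maps $\H$-surfaces to $\H$-surfaces}. The linear part of $R_t$ is, for all $t$, the reflection $\sigma(x)=(-x_1,x_2,x_3)$ about the vector plane parallel to $P$. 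Since a reflection is an isometry preserving the unsigned principal curvatures, if $N$ is the Gauss map of an $\H$-surface $M$ then $R_t(M)$, oriented by $\tilde N=\sigma\circ N\circ R_t^{-1}$, has the same mean curvature, and by hypothesis (3) we get $H_{R_t(M)}=H_M=\H(N)=\H(\sigma N)=\H(\tilde N)$. Equivalently, $R_t$ is a composition of $\sigma$ with translations, which preserve $\H$-surfaces by property (1). This is precisely the step that makes hypothesis (3) indispensable.

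Next I would set $t_0=\max\{x_1(p):p\in M\}$ and decrease $t$ from $t_0$. Write $M_t^+=M\cap\{x_1\ge t\}$, its reflection $M_t^*=R_t(M_t^+)\subset\{x_1\le t\}$, and $M_t^-=M\cap\{x_1\le t\}$. Using embeddedness, compactness and the fact that $M$ lies over $\Pi$, for $t$ slightly below $t_0$ the cap $M_t^*$ is well defined and lies strictly on the near side of $M_t^-$ (between it and $P_t$). Let $t^*$ be the smallest $t\ge0$ for which $M_s^*$ remains on the near side of $M_s^-$ for every $s\in[t,t_0]$, and analyze the first contact occurring at $t=t^*$. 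A contact is either an interior tangency or a boundary tangency with equal conormals; in the two cases Lemma \ref{ppiomax}(1) and Lemma \ref{ppiomax}(2) respectively force $M_{t^*}^*=M_{t^*}^-$, i.e. $M$ (and hence $\Gamma=\partial M$) is symmetric with respect to $P_{t^*}$. If $t^*>0$ this makes $\Gamma$ symmetric about the two distinct parallel planes $P_0$ and $P_{t^*}$; as the composition of those two reflections is a nonzero translation in the $x_1$-direction, $\Gamma$ would be invariant under a nontrivial translation, contradicting its compactness. Hence $t^*=0$. Finally, running the identical argument with planes approaching $P$ from the side $\{x_1<0\}$ yields the reverse one-sided comparison, and the two comparisons together upgrade to the equality $M_0^*=M_0^-$, which is the asserted symmetry of $M$ about $P$.

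The main obstacle I expect is controlling the boundary throughout the sweep so that the first contact is a genuine tangency to which Lemma \ref{ppiomax} applies, and this is exactly where hypotheses (1) and (2) enter. Hypothesis (2) guarantees that each half of $\Gamma$ is a graph over $\ell=\Pi\cap P$, so that the planar domain $\Omega$ bounded by $\Gamma$ is convex in the $x_1$-direction; consequently, for $t>0$ the reflected boundary arc $R_t(\Gamma\cap\{x_1\ge t\})$ has $x_1$-coordinate $2t-x_1\in[-x_1,x_1]$ and therefore stays inside $\Omega$. This prevents spurious boundary-boundary crossings for $t>0$, so the caps can only meet in the interior or tangentially along a shared boundary arc. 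Hypothesis (1) guarantees that at $t=0$ the reflected boundary $R_0(\Gamma\cap\{x_1\ge0\})$ coincides with $\Gamma\cap\{x_1\le0\}$, so that $M_0^*$ and $M_0^-$ share the same boundary and the maximum principle can turn "one side of the other" into equality. The assumption that $M$ lies on one side of $\Pi$ keeps the reflected caps in $\{x_3\ge0\}$ together with $M_t^-$, so that "near side" is unambiguous and the two surfaces are legitimately comparable.

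A delicate technical point, which I would treat carefully rather than as routine, is verifying that at the first contact the two $\H$-surfaces carry the \emph{same} unit normal, not opposite ones: this is what the near-side configuration produces and what is required to invoke Lemma \ref{ppiomax}. It matters here because $\H$ is not assumed to satisfy $\H(-x)=-\H(x)$, so the tangency principle of Corollary \ref{ppiotangencia} (which would otherwise remove any orientation bookkeeping) is unavailable.
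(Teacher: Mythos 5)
Your proposal is correct and follows essentially the same route as the paper: the Alexandrov moving-plane method with the family of planes parallel to $P$, using hypothesis (3) to show reflections send $\H$-surfaces to $\H$-surfaces, hypothesis (2) to exclude spurious boundary contacts, and the interior/boundary maximum principle of Lemma \ref{ppiomax} at the critical position. The only difference is in the endgame bookkeeping --- the paper assumes non-symmetry to force a first-contact plane $P_{t_1}$ with $t_1>0$ and derives the contradiction there, while you sweep directly to $t=0$ and combine the two one-sided comparisons --- but this is the same method with the same key ingredients.
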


\begin{proof}
After a change of coordinates, we assume that $\Pi$ is the plane of equation $x_3=0$ and that $P$ is the plane of equation $x_1=0$.  Let $\Omega\subset\Pi$ the domain bounded by $\Gamma$. Since $M$ is embedded, $M\cup\Omega$ defines a closed surface, possible singular along $M\cap\Omega$, that bounds a domain $W$ in $\r3$. Assume without losing generality that $M\subset\{x_3\geq0\}$. Let us orient $M$ so $N$   points towards the interior of $W$.

For any $t\in\R$, we denote $P_t$ the plane of equation $ x_1=t$. We define $M(t)^-=M\cap\{x_1\leq t\},\ M(t)^+=M\cap\{x_1\geq t\}$ and $M(t)^*$ the reflection of $M(t)^+$ about $P_t$. At this point, the third item in the hypotheses of Th. \ref{t-s} allows us to assert that $M(t)^*$   is again a  $\H$-surface where the orientation on $M(t)^*$ is  $N^*$, being $N^*=R_{P_t}(N)$. Here $R_{P_t}$ denotes the reflection about the plane $P_t$. Indeed, if $q\in M(t)^*$, then $q=R_{P_t}(p)$ for some $p\in M(t)^+$. Then
$$H_{M(t)^*}(q)=H_{M(t)^+}(p)=\H(N_p)=\H(R_{P_t}(N_p))=\H(N^*_{q}),$$
where we have used the identity $\H=\H\circ R_{P_t}$ thanks to the third item of the hypothesis. 

Although the proof of theorem is standard, by completeness we present an outline of it in order to emphasize where the third hypothesis in the statement of Th. \ref{t-s} is crucial. Arguing by contradiction, assume that $M$ is not symmetric with respect to $P$. Then, there are two points $q_1,q_2\in M\textbackslash\Gamma$ such that the line $\overline{q_1q_2}$ is orthogonal to $P$, $q_1$ and $q_2$ are in different components of $\r3\textbackslash P$ and $\mbox{dist}(q_1,P)\neq \mbox{dist}(q_2,P)$; without losing generality, assume $x_1(q_2)<0<x_1(q_1)$. If we name $x^*$ to the reflection of $x\in\r3$ about $P$, then $x_1(q_1^*)<x_1(q_2)$.

By compactness of $M$, for $t$ large enough we have $P_t\cap M=\varnothing$. We let $t$ decrease until we arrive to a first instant $t_0>0$ where there is a first contact point between $P_{t_0}$ and $M$; we know that $t_0>0$ since $x_1(q_1)>0$. The embeddedness of $M$ and since $M$ lies above $\Pi$ ensures the existence of $\varepsilon>0$ such that $\mathrm{int}(M(t)^*)\subset W$ for every $t\in(t_0-\varepsilon,t_0)$. By compactness of $M$ and by decreasing $t$, we define
$$
t_1=\inf\{t\in\R:\ \mathrm{int}(M(t)^*)\subset W\ \mathrm{and}\ M(t)^*\text{is a graph over}\ \Pi_t\}.
$$
Because $\Pi$ is a plane of symmetry of $\Gamma$, $t_1\geq0$. For $t<t_1$, whatever the reason for which $t$ fails to be the infimum, it happens $M(t)^*\not\subset\overline{W}$. In our setting, $t_1>0$ because of the existence of $q_1,q_2$ such that $x_1(q_1^*)<x_1(q_2)<0$. 

Since $M(t)^+$ is a graph over $P_t$ for $t>t_1$ and $\Gamma\cap\{x_1>0\}$ is also a graph over the line $P\cap\Pi$, we have $\partial M(t_1)^*\cap\Gamma\subset P_{t_1}$. The latter hypothesis is also crucial since it could happen that the first meeting point $p$ between $M(t_1)^*$ and $M(t_1)^-$ lies in $\Gamma$ where none of the surfaces $M(t_1)^*$ and $M(t_1)^-$ are tangent. There are two possibilities for $p$:
\begin{enumerate}
\item There exists $p\in\mathrm{int}(M(t_1)^*)\cap\mathrm{int}(M(t_1)^-)$. Since $p$ is an interior point, $M(t_1)^*$ and $M(t_1)^-$ are tangent and the orientation of both surfaces point towards the interior of $W$ and in particular they agree. The maximum principle implies that $M(t_2)^*=M(t_2)^-$ and thus $P_{t_1}$ is a plane of symmetry of $M$. However, this contradicts that $\Gamma$ is not invariant by reflections about $P_{t_1}$.
\item The surface $M$ is orthogonal to $P_{t_1}$ at some $p\in\partial M(t_1)^*\cap\partial M(t_1)^-$. This time we use the boundary version of the maximum principle to conclude that $P_{t_1}$ is a plane of symmetry of $M$, which is again a contradiction.
\end{enumerate}
This implies that $M$ is symmetric with respect to $P$, proving the result.
\end{proof}

Some   examples of prescribed functions under the hypothesis of Th. \ref{t-s} are
$$
\H(x)=\langle x,v\rangle^\alpha+\lambda,\quad  x\in\s^2,
$$
where $\alpha\geq1$ (to ensure that $\H$ is $C^1$) and $\lambda\in\R$.
Indeed, if $P$ is a vector plane containing $v$ and $R_P$ is the reflection about $P$, then for every $x\in\s^2$ we have $\H(x)=\H(R_p(x))$. In particular, if $M$ is an $\H$-surface then the reflection about any plane parallel to $P$ is again an $\H$-surface. 

For the particular case that the above symmetry property holds for every vector plane containing $v$, then $\H$ is also invariant under any rotation of $\s^2$ that preserves the $v$-direction pointwise fixed. Our purpose  is to conclude that $M$ is a rotational $\H$-surface when its boundary is a circle. In this fashion, we give the following definition.
\begin{definition}
Let be $v\in\s^2$. A function $\H\in C^1(\s^2)$ is said to be rotational about the $v$-direction  if there exists $\h\in C^1([-1,1])$ such that
$$
\H(x)=\h(\langle x,v\rangle),\hspace{.5cm} \forall x\in\s^2.
$$
\end{definition}
As a consequence, if $\H$ is rotational about some direction $v$, then rotations about any line parallel to $v$ send $\H$-surfaces into $\H$-surfaces and in particular the notion of rotational $\H$-surface is well-defined. We refer the reader to \cite{BGM1} for a deep study and classification of rotational $\H$-surfaces, including the achievement of a Delaunay-type classification theorem. Furthermore,  as a particular case of a more general study regarding a linear Weingarten relation, it was exhibited in \cite{BuLo} necessary and sufficient conditions for the existence of a rotational $\H$-surface $M^+$ (resp. $M^-$) intersecting orthogonally the rotation axis with unit normal $N=v$ (resp. $N=-v$). The following result gives a characterization of $M^\pm$.

\begin{corollary}\label{cor-2}
Let   $\H$ be a rotational functional about   $v\in\s^2$. Let $\Gamma$ be a circle in a plane $\Pi=v^\bot$ such the plane $\Pi$ is oriented with   $v$. Assume that $M$ is a compact, embedded $\H$-surface spanning $\Gamma$ and lying at one side of $\Pi$.
\begin{enumerate}
\item If $M$ lies above $\Pi$, then $\H(-v)>0$ and $M=M^-$.
\item If $M$ lies below $\Pi$, then $\H(v)>0$ and $M=M^+$.
\end{enumerate}
\end{corollary}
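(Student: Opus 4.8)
The plan is to use Theorem \ref{t-s} to force rotational symmetry, reduce $M$ to its profile curve, and then match that profile to the model caps $M^\pm$ of \cite{BuLo} through a uniqueness argument for the profile ODE; all the work is in pinning down the correct orientation and sign at the axis.

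First I would establish rotational symmetry. Let $\ell$ be the line orthogonal to $\Pi$ through the center of the circle $\Gamma$, and apply Theorem \ref{t-s} to every plane $P\supset\ell$. Each such $P$ is orthogonal to $\Pi$, is a symmetry plane of the circle $\Gamma$ (hypothesis 1) and splits $\Gamma$ into two graphs over $P\cap\Pi$ (hypothesis 2). Since $\H$ is rotational about $v$, it is invariant under the reflection about any vector plane containing $v$; as $P\supset\ell$ is parallel to such a vector plane, hypothesis 3 holds. Thus $M$ is symmetric with respect to every $P\supset\ell$. Because the composition of the reflections about two distinct planes through $\ell$ is a rotation about $\ell$, invariance under all such reflections means invariance under every rotation about $\ell$; hence $M$ is a rotational $\H$-surface with axis $\ell$. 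Being compact, embedded, spanning the single circle $\Gamma$ and lying on one side of $\Pi$, its profile curve joins a point of $\Gamma$ to the axis and meets $\ell$ orthogonally at a single point $q$; there the tangent plane is parallel to $\Pi$, so $N_q=\pm v$, and $q$ is the extreme point of $M$ in the $v$-direction.

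Next I would fix the sign and the normal at $q$ in the case $M$ lies above $\Pi$ (the case below is symmetric). Then $q$ is the highest point and $M$ lies locally below its horizontal tangent plane $T_qM$, so orienting $M$ by the Gauss map pointing into the region $W$ bounded by $M\cup\Omega$ gives $N_q=-v$. Comparing $M$ with $T_qM$ (a plane, hence an $\H$-surface for the constant prescribed function $0$) at $q$ via Lemma \ref{ppiocomp}: along the common normal $-v$ the surface $M$ lies above the plane, so $\H(-v)\geq 0$; since $M$ is not contained in a plane, the strong maximum principle Lemma \ref{ppiomax} upgrades this to $\H(-v)>0$. The symmetric argument for $M$ below $\Pi$ gives $N_q=v$ and $\H(v)>0$.

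Finally, to conclude $M=M^-$, I would invoke the classification of \cite{BuLo}: the condition $\H(-v)>0$ is precisely the existence condition there for the rotational $\H$-surface $M^-$ meeting $\ell$ orthogonally with normal $-v$, and $M^-$ is the unique such surface up to the vertical translation inherent to the family of $\H$-surfaces. Both $M$ and a suitable vertical translate of $M^-$ are rotational $\H$-surfaces whose profile curves satisfy the same first-order ODE with the same contact point and the same normal $-v$ at $q$; uniqueness of solutions to this initial value problem yields $M=M^-$, and likewise $M=M^+$ when $M$ lies below $\Pi$. I expect the main obstacle to lie in this last step together with the orientation bookkeeping: one must fix the Gauss map of $M$ at $q$ so that the sign comes out as $\H(-v)>0$ (rather than $\H(v)\leq 0$ for the reversed orientation) and so that $M$ is matched to $M^-$ and not to $M^+$, and then extract from \cite{BuLo} the exact existence-and-uniqueness statement that turns ``$M$ is a rotational cap meeting the axis orthogonally with normal $-v$'' into the equality $M=M^-$.
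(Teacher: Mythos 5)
Your route is the one the paper intends (Corollary \ref{cor-2} is stated without an explicit proof, as a consequence of Theorem \ref{t-s} applied to all planes through the axis together with the classification of $M^{\pm}$ in \cite{BuLo}), and two of your three steps are sound: the symmetrization step is exactly right, and you correctly identify that the orientation convention --- the Gauss map of $M$ pointing into the region $W$ bounded by $M\cup\Omega$, as chosen in the proof of Theorem \ref{t-s} --- is what the phrase ``$\Pi$ is oriented with $v$'' must encode, since without some such convention the statement is simply false (take $\H\equiv -1/R$ and the upper spherical cap with the outer normal).

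There is, however, one concrete gap: you assert that the axis point $q$ is the extreme point of $M$ in the $v$-direction, and your entire sign argument ($N_q=-v$, then $\H(-v)\geq 0$, then strict inequality) is run at $q$ under that assumption. This is unjustified and can fail: a rotational $\H$-cap may attain its maximum height off the axis (a ``mushroom'' profile, with $q$ at the bottom of a central dimple), in which case $M$ lies locally \emph{above} $T_qM$ and the comparison at $q$ would produce the reverse inequality $\H(-v)\leq 0$. The argument is repairable by decoupling the two things you extract from $q$. First, run the comparison at a genuine highest point $p$ of $M$, which is interior; since every point above $p$ lies in the unbounded component of $\r3\setminus(M\cup\Omega)$, the into-$W$ orientation forces $N_p=-v$, and then Lemma \ref{ppiocomp} together with the maximum principle (Lemma \ref{ppiomax}, to exclude $\H(-v)=0$, which would force $M\subset T_pM$) gives $\H(-v)>0$. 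Second, obtain $N_q=-v$ not from maximality but from the orientation alone: the upward ray from the center of $\Gamma$ starts inside $W$, meets $\partial W$ only at $q$ (the surface meets the axis only there), hence $W$ lies locally below $q$ and the into-$W$ normal at $q$ is $-v$. With these two points separated, your final identification $M=M^-$ via \cite{BuLo} goes through as you describe --- with the caveat, which you anticipate, that the profile equation is second order and singular at the axis, so the uniqueness must indeed be quoted from \cite{BuLo} rather than from naive ODE theory.
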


\section{Compact $\H$-surfaces with planar boundary} \label{sec4}

As a consequence of Th. \ref{t-s}, it is interesting to have conditions that ensure that an embedded compact $\H$-surface with planar boundary lies contained in one side of the boundary plane.  We will prove two results in this direction. The first one extends a similar situation for CMC surfaces \cite{Koi}. 

\begin{theorem}\label{t-k}
Let   $\Pi$ be a plane and $\Gamma\subset\Pi$ be a closed simple curve that bounds a domain $\Omega$, and let $\mathrm{ext}(\Omega)=\Pi\textbackslash\overline{\Omega}$. If $M$ is a compact, embedded $\H$-surface that spans $\Gamma$ and $M\cap\mathrm{ext}(\Omega)=\varnothing$, then $\mathrm{int}(M)$ lies contained in one of the two closed halfspaces determined by $\Pi$.
\end{theorem}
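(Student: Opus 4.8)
I would argue by contradiction and reduce everything to the maximum/comparison principles of Lemmas \ref{ppiomax} and \ref{ppiocomp}. After a rigid motion, normalize $\Pi=\{x_3=0\}$, with $\Gamma=\partial M$ and $\Omega$ the bounded domain it encloses in $\Pi$, and orient $M$ by the normal pointing into the region that $M\cup\overline{\Omega}$ encloses. Suppose $\mathrm{int}(M)$ meets \emph{both} open half-spaces $\{x_3>0\}$ and $\{x_3<0\}$, and write $M^+=M\cap\{x_3\geq0\}$, $M^-=M\cap\{x_3\leq0\}$. Since $M$ is embedded and $M\cap\mathrm{ext}(\Omega)=\varnothing$, the intersection $M\cap\Pi$ is exactly $\Gamma$ together with a (possibly empty) family of closed curves lying inside $\Omega$; in particular the reflected or translated copies of the caps of $M^-$ cannot escape laterally beyond the vertical cylinder over $\overline{\Omega}$. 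The goal is to turn the existence of $M^-$ into a contradiction; running the symmetric argument for $M^+$ then forces $\mathrm{int}(M)$ into one of the two closed half-spaces.

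The template from the CMC case treated by Koiso \cite{Koi} is an Alexandrov scheme in the vertical direction: consider the horizontal planes $\Pi_t=\{x_3=t\}$, start just above $\min_M x_3<0$, reflect the part of $M$ lying below $\Pi_t$, and move $t$ upward while comparing the reflected cap with $M$. The confinement over $\overline{\Omega}$ provided by $M\cap\mathrm{ext}(\Omega)=\varnothing$ and embeddedness forces a first contact at an interior point with coinciding normals; Lemma \ref{ppiomax} would then produce a horizontal symmetry plane $\Pi_{t_0}$ with $t_0<0$, which is impossible because the whole boundary $\Gamma$ sits in $\Pi_0$. This would rule out $M^-$.

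The crucial difficulty, and the genuine departure from the CMC setting, is that reflection across a horizontal plane does \emph{not} preserve the $\H$-surface equation: the reflected cap solves the prescribed curvature equation for $\widetilde{\H}(x)=-\H(-R(x))$, with $R$ the reflection in the horizontal vector plane, rather than for $\H$, so Lemma \ref{ppiomax} cannot be applied verbatim to the reflected pieces. To bypass this I would replace reflection by the two symmetries that \emph{are} available, namely invariance under vertical translation and the comparison principle: rather than reflecting a cap of $M^-$, I would slide a vertical translate of $M$ itself (still an $\H$-surface) until a first tangency with the lowest cap and invoke Lemma \ref{ppiocomp}, where equal Gauss maps at the contact point force the two sheets to agree and yield the contradiction. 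The step I expect to be the main obstacle is controlling the boundary of the sliding family so that the first contact is guaranteed to be an \emph{interior} tangency and not merely a meeting along $\Gamma$; this is precisely where the hypothesis $M\cap\mathrm{ext}(\Omega)=\varnothing$ must be used, to localize the contact inside the cylinder over $\Omega$ and keep the orientations compatible.
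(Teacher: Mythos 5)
You correctly identify the central obstruction (reflections across horizontal planes do not preserve Eq.~\eqref{h1}), but the substitute you propose has a genuine gap. At a first tangency between $M$ and a vertical translate $M_{s_0}=M+s_0e_3$, the Gauss maps need \emph{not} coincide: the normal of $M_{s_0}$ at the contact point $q$ is the normal of $M$ at $q-s_0e_3$, which bears no relation to $N_q$, and in the typical configuration (the lower sheet of the upper copy touching the top of a bulge of the lower copy) the two normals are \emph{opposite}. Lemma~\ref{ppiomax} requires coinciding normals, and Lemma~\ref{ppiocomp} only yields an inequality, never coincidence of the sheets. With opposite normals nothing can be concluded unless $\H(-x)=-\H(x)$, i.e.\ unless the tangency principle (Cor.~\ref{ppiotangencia}) is available --- and no such hypothesis appears in Th.~\ref{t-k}. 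This is not a technicality: the example in Section~\ref{sec2} of a function $\H$ for which both planes and upper half-spheres are $\H$-surfaces shows that two $\H$-surfaces can be tangent, one on one side of the other, with opposite normals. Tellingly, the paper does run exactly your sliding argument, but only in Th.~\ref{propositionpyo}, where $\H(-x)=-\H(x)$ is assumed precisely so that the tangency principle covers the opposite-normal case. There is also a structural flaw in your framing: you cannot rule out $M^-$ (and then $M^+$) separately, since the conclusion allows $M$ to lie entirely below $\Pi$; any correct proof must exploit the presence of interior points on \emph{both} sides simultaneously.

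The paper's proof does this with no reflections and no sliding. Using embeddedness and $M\cap\mathrm{ext}(\Omega)=\varnothing$, it caps $M$ off with the planar annulus $D(r)\setminus\Omega$ and a large upper hemisphere, obtaining an embedded closed surface that bounds a domain $W$, and orients $M$ towards $W$. If $\mathrm{int}(M)$ had points strictly on both sides of $\Pi$, the lowest point $p$ and the highest point $q$ of $M$ are both interior, and the key observation is that the inward orientation forces the \emph{same} normal at both: $N_p=N_q=e_3$ (at $p$ because the unbounded exterior of $W$ lies below, at $q$ because the vertical segment from $q$ up to the hemisphere lies inside $W$). The comparison principle against the horizontal tangent planes then pinches $\H(e_3)\geq 0$ at $p$ and $\H(e_3)\leq 0$ at $q$, so $\H(e_3)=0$; hence $T_pM$ oriented by $e_3$ is itself an $\H$-surface tangent to $M$ at $p$ with the same normal, and Lemma~\ref{ppiomax} (now legitimately applicable) gives $M\subset T_pM$, contradicting $\Gamma\subset\Pi$. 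If you want to salvage your write-up, replace the sliding step by this pinching of $\H(e_3)$ from the two extreme points; your observation about where $M\cap\mathrm{ext}(\Omega)=\varnothing$ enters remains correct, since that hypothesis is what makes the capped-off surface embedded.
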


\begin{proof}
Without loss of generality, we assume that $\Pi$ is the plane of equation $x_3=0$.  Let $\s^1(r)\subset\Pi$ a circle of radius $r$ and $\s^2_+(r)$ the upper hemisphere in the half-space $\{x_3\geq0\}$, with $\partial\s^2_+(r)=\s^1(r)$, and $D(r)$ the disc bounded by $\s^1(r)$. Since $M$ is compact, let   $r$ big enough such that $M\cap\{x_3\geq0\}$ is strictly contained in the domain bounded by $D(r)\cup\s^2_+(r)$. In this setting,
$$
K=M\cup\left(D(r)\textbackslash\Omega\right)\cup\s^2_+(r)
$$
is an embedded, closed surface in $\r3$, possibly not smooth along $\Gamma\cup\s^1(r)$, that bounds a domain $W$ in $\r3$. Let $N$ be the unit normal of $M$ and take on $K$ the orientation $\widetilde{N}$ such that $\widetilde{N}_{|M}=N$. Then $\widetilde{N}$ points towards either the interior of the exterior of $W$. We will assume that $\widetilde{N}$ points towards the interior of $W$: on the contrary, the argument is similar.  

By contradiction, assume that $M$ has interior points in both sides of $\Pi$. Let $p,q\in \mathrm{int}(M)$ be the points of minimum and maximum height, respectively, to $\Pi$ with $x_3(p)<0$ and $x_3(q)\geq0$ (if $x_3(p)\leq0$ and $x_3(q)>0$ the argument is similar). Since $\widetilde{N}$ points towards the interior of $W$, then $N_p=N_q=e_3$. Let $T_pM$ and $T_qM$ be the affine tangent planes to $M$ at $p$ and $q$, oriented with unit normal $e_3$. Since $M$ lies above $T_pM$ (resp. below $T_qM$) locally around $p$ (resp. around $q$), hence the mean curvature comparison principle yields $H_M(p)\geq 0$ and $H_M(q)\leq 0$. Thus
$$
0\leq H(p)=\H(N_p)=\H(e_3)=\H(N_q)=H(q)\leq0.$$
Therefore $\H(e_3)=0$. Hence $T_pM$ and $T_qM$ are $\H$-surfaces.  The maximum principle of $\H$-surfaces applied  to $M$ and $T_pM$ implies that $M$ is included in $T_pM$. This is a contradiction because $\Gamma$ is not included in the plane $T_pM$. 
\end{proof}

An immediate application of this theorem is  for graphs.  
\begin{corollary}\label{corolariografoladoplano}
If $M$ is a compact $\H$-graph spanning a planar closed simple curve, then   lies contained in one of the two closed halfspaces determined by the boundary plane.
\end{corollary}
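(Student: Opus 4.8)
The plan is to show that Corollary \ref{corolariografoladoplano} follows almost immediately from Theorem \ref{t-k}, the only task being to verify that a compact $\H$-graph satisfies the hypotheses of that theorem. First I would set up the geometry: let $M$ be a compact $\H$-graph over a planar domain and let $\Gamma$ be the boundary curve, which is a closed simple curve lying in a plane $\Pi$. After a change of coordinates I would take $\Pi$ to be the plane $x_3=0$; being a graph means $M=\{(x_1,x_2,u(x_1,x_2)):(x_1,x_2)\in\overline{\Omega}\}$ for a function $u$ defined on the closure of the planar domain $\Omega$ bounded by $\Gamma$. The key point is that the projection $\pi(x_1,x_2,x_3)=(x_1,x_2)$ maps $M$ diffeomorphically onto $\overline{\Omega}$.

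Next I would verify the two hypotheses of Theorem \ref{t-k}. Embeddedness is automatic: a graph is always an embedded surface, since distinct points of $M$ have distinct projections and hence are distinct in $\r3$. For the condition $M\cap\mathrm{ext}(\Omega)=\varnothing$, recall that $\mathrm{ext}(\Omega)=\Pi\setminus\overline{\Omega}$ consists of points in the boundary plane lying outside $\overline{\Omega}$. A point of $M$ lying in $\Pi$ must be a boundary point of the graph, since the interior of the graph projects into $\Omega$ and boundary data; more precisely, any $p\in M\cap\Pi$ has $x_3(p)=0=u(\pi(p))$, and because $\pi(p)\in\overline{\Omega}$, we have $p\notin\mathrm{ext}(\Omega)$. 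Thus $M\cap\mathrm{ext}(\Omega)=\varnothing$ holds trivially.

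With both hypotheses in place, Theorem \ref{t-k} applies and yields that $\mathrm{int}(M)$ lies in one of the two closed half-spaces determined by $\Pi$, which is exactly the conclusion. I would then note that since $M$ is a graph over $\overline{\Omega}$, the full surface (including its boundary $\Gamma\subset\Pi$) lies in that same closed half-space, giving the stated result. The argument requires no new estimates or maximum-principle invocations beyond what Theorem \ref{t-k} already provides.

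I expect no genuine obstacle here, as the corollary is a specialization rather than a new phenomenon; the only subtlety worth a careful sentence is confirming that the graph hypothesis rules out interior points of $M$ meeting $\mathrm{ext}(\Omega)$, which is where one must use that the graphing function is defined precisely over $\overline{\Omega}$ and not beyond it. Everything else is a direct citation of Theorem \ref{t-k}.
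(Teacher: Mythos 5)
Your proposal is correct and matches the paper's approach exactly: the paper states Corollary \ref{corolariografoladoplano} as an immediate consequence of Theorem \ref{t-k}, and your argument simply fills in the routine verification that a graph over the boundary plane is embedded and stays inside the solid cylinder over $\overline{\Omega}$, hence misses $\mathrm{ext}(\Omega)$. The closing observation that $\partial M\subset\Pi$ lies in both closed halfspaces, so the whole of $M$ (not just $\mathrm{int}(M)$) lies in one of them, is the right finishing touch.
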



In the second result, we prove that the surface lies contained in one side of the boundary plane $\Pi=v^\bot$ if $\H$ takes a special behaviour at $v\in\s^2$.  Comparing with the hypothesis of Th. \ref{t-k}, we drop the assumption of the embeddedness of the surface.

\begin{theorem}\label{thHsuperficieaunlado}
Let $\H\in C^1(\s^2)$ and $v\in\s^2$ such that $\H(v)\H(-v)<0$ and $\Pi=v^\bot$. Let $M$ be a compact $\H$-surface whose boundary $\partial M$ is contained in the plane $\Pi=v^\bot$.  
\begin{enumerate}
\item If $\H(v)>0$, then $\mathrm{int}(M)$ lies in the halfspace $\{x\in\R^3:\langle x,v\rangle<0\}$.
\item If $\H(v)<0$, then $\mathrm{int}(M)$ lies in the halfspace $\{x\in\R^3:\langle x,v\rangle>0\}$.
\end{enumerate}
\end{theorem}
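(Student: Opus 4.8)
The plan is to locate an extremal point of the linear height function $f(x)=\langle x,v\rangle$ on $M$ and to compare $M$ there with its affine tangent plane via the comparison principle of $\H$-surfaces (Lemma~\ref{ppiocomp}), in the same spirit as Prop.~\ref{lemacerradas} and Th.~\ref{t-k}. First I would reduce the statement to item (1): assuming (1) is proved, if $\H(v)<0$ I set $\tilde v=-v$, so that $\H(\tilde v)=\H(-v)>0$ and $\H(\tilde v)\,\H(-\tilde v)=\H(-v)\,\H(v)<0$; applying (1) to the direction $\tilde v$ gives $\mathrm{int}(M)\subset\{\langle x,\tilde v\rangle<0\}=\{\langle x,v\rangle>0\}$, which is exactly item (2).

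So it remains to prove (1), where $\H(v)>0$ and hence $\H(-v)<0$. Arguing by contradiction, I would suppose that some interior point of $M$ satisfies $\langle x,v\rangle\geq0$. Since $M$ is compact, $f$ attains a maximum $h\geq0$, and this maximum is realized at an interior point $p$: if $h>0$, every maximizer lies off $\partial M\subset\Pi=\{f=0\}$; if $h=0$, the assumed point is itself an interior maximizer. At such $p$ the tangent plane $T_pM$ is parallel to $\Pi$, so that $N_p=\pm v$, and $M$ lies locally on the side $\{f\leq h\}$ of $T_pM$.

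The decisive step is the sign analysis at $p$, and it is precisely here that the hypothesis $\H(v)\,\H(-v)<0$ enters, serving to rule out \emph{both} possible orientations. Viewing the plane $T_pM$ as an $\H$-surface for the prescribed function $\equiv0$ and orienting it by the common normal $N_p$, I would apply Lemma~\ref{ppiocomp} to $M$ and $T_pM$: if $N_p=v$, then $T_pM$ lies above $M$ in the direction of the common normal, whence $0\geq\H(v)$, contradicting $\H(v)>0$; if $N_p=-v$, then $M$ lies above $T_pM$ in the direction $N_p=-v$, whence $\H(-v)\geq0$, contradicting $\H(-v)<0$. Either way one reaches a contradiction, so no interior point satisfies $\langle x,v\rangle\geq0$, giving $\mathrm{int}(M)\subset\{\langle x,v\rangle<0\}$. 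I expect the only genuinely delicate point to be the bookkeeping of ``above/below'' relative to the common normal, since $N_p$ is not known a priori and its sign flips the direction of the comparison inequality; the role of the opposite signs of $\H(\pm v)$ is exactly to defeat both cases. Finally, I would note that embeddedness is never used, the whole argument being local around $p$, which is why this hypothesis can be dropped here compared with Th.~\ref{t-k}.
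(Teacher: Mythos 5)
Your proof is correct and follows essentially the same route as the paper's: take the highest point of $M$ (necessarily interior), note that its normal is $\pm v$, and apply the comparison principle of $\H$-surfaces against the affine tangent plane, so that the opposite signs of $\H(v)$ and $\H(-v)$ rule out both possible orientations. The only cosmetic differences are that you regard the tangent plane as a surface with prescribed function identically $0$ while the paper writes it as an $\H_i$-surface for the shifted function $\H_i(x)=\H(x)-\H(\pm v)$ (both give the identical inequalities), and that you spell out the edge case of maximum height $h=0$ and the reduction of item (2) to item (1), which the paper leaves as ``analogous.''
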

Examples of functions $\H$ satisfying the condition of this theorem are $\H(x)=\langle x,v\rangle^n$, where $n$ is an odd natural number.

\begin{proof} Without loss of generality, we assume that $v=e_3$. We prove the first item, since the second one is analogous. Assume $\H(e_3)>0$ and, by hypothesis, $\H(-e_3)<0$. Arguing by contradiction, suppose the existence of  interior points of $M$ in $\{x\in\R^3:\langle x,e_3\rangle\geq 0\}$.   Let $q=\max\{\langle p,e_3\rangle:\ p\in M\}$, in particular,    $\langle q,e_3\rangle\geq 0$. Since $q$ is an interior point, its tangent plane $T_qM$ is the plane of equation $x_3=x_3(q)$. Thus  $N_q=e_3$ or $N_q=-e_3$. 

Suppose that $N_q=e_3$. We orient $T_qM$ by the vector $e_3$ and in particular, $T_qM$ is an $\H_1$-surface where $\H_1(x)=\H(x)-\H(e_3)$. Then $T_qM$ lies above $M$ around $q$ and the comparison principle of $\H$-surfaces implies $\H_1(N_q)\geq\H(N_q)$. This says $0\geq \H(e_3)$, which is not possible. Thus necessarily $N_q=-e_3$.  Now we orient $T_qM$ by $-e_3$, being $T_qM$ an $\H_2$-surface for $\H_2(x)=\H(x)-\H(-e_3)$. Since now $M$ lies above $T_qM$ around $q$, the comparison principle yields $\H(N_q)\geq\H_2(N_q)$, that is, $\H(-e_3)\geq 0$. This contradiction proves the result. 
\end{proof}

A context where Theorem \ref{thHsuperficieaunlado} is useful is in the study of $\lambda$-translators with boundary. Recall that a surface $M$ in $\r3$ is a $\lambda$-translator if its mean curvature $H_M$ satisfies  $
H_M(p)=\langle N_p,\textbf{w}\rangle+\lambda$, where $ \textbf{w}\in\s^2$ is called the \emph{density vector} and $\lambda\in\R$.  After a change of the orientation, we can assume  $\lambda\geq0$ without losing generality.  In \cite{Lop}, the second author addressed the study of compact $\lambda$-translators with boundary. We now  improve these results, by showing that the hypothesis $0\leq \lambda\leq1$ has a paramount consequence in Th. \ref{thHsuperficieaunlado}.

\begin{corollary}\label{corlambdatranslatoraunlado}
Let $0\leq \lambda\leq1$ and $M$ be a compact $\lambda$-translator with density vector $\emph{\w}$ and   whose boundary is contained in a plane $\Pi=v^\bot$, $v\in\s^2$. If $|\langle v,\emph{\w}\rangle|\geq\lambda$, then $\mathrm{int}(M)\subset\{x\in\R^3:\mathrm{sign}(\langle v,\emph{\w}\rangle) \langle x,v\rangle\leq0\}$. Moreover, there exists $p_0\in\mathrm{int}(M)\cap\Pi\not=\emptyset$ if and only if $M$ is planar and $|\langle v,\emph{\w}\rangle|=\lambda$.

\end{corollary}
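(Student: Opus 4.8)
The plan is to observe that a $\lambda$-translator with density vector $\w$ is exactly an $\H$-surface for $\H(x)=\langle x,\w\rangle+\lambda$, and then to feed this $\H$ into Theorem \ref{thHsuperficieaunlado}. Everything hinges on the elementary identity
$$\H(v)\,\H(-v)=(\langle v,\w\rangle+\lambda)(-\langle v,\w\rangle+\lambda)=\lambda^2-\langle v,\w\rangle^2,$$
so that the hypothesis $|\langle v,\w\rangle|\ge\lambda$ is precisely $\H(v)\H(-v)\le 0$, with equality if and only if $|\langle v,\w\rangle|=\lambda$. (Since $v,\w\in\s^2$ force $|\langle v,\w\rangle|\le 1$, this hypothesis is only satisfiable because $\lambda\le 1$.) I would split the proof according to whether this product is strictly negative or zero.

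First, in the strict case $|\langle v,\w\rangle|>\lambda$ I would apply Theorem \ref{thHsuperficieaunlado} directly, matching the sign: since $\lambda\ge 0$ one has $\mathrm{sign}\,\H(v)=\mathrm{sign}\langle v,\w\rangle$ (if $\langle v,\w\rangle>\lambda\ge 0$ then $\H(v)>0$, and if $\langle v,\w\rangle<-\lambda\le 0$ then $\H(v)<0$). Item (1) or (2) of Theorem \ref{thHsuperficieaunlado} then places $\mathrm{int}(M)$ strictly on the side $\mathrm{sign}(\langle v,\w\rangle)\langle x,v\rangle<0$; in particular $\mathrm{int}(M)\cap\Pi=\varnothing$. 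This settles the inclusion in the strict case and records that no interior point can meet $\Pi$.

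The main work is the boundary case $|\langle v,\w\rangle|=\lambda$, where Theorem \ref{thHsuperficieaunlado} does not apply because its hypothesis $\H(v)\H(-v)<0$ fails and exactly one of $\H(\pm v)$ vanishes. Replacing $v$ by $-v$ if necessary, I would assume $\langle v,\w\rangle=\lambda\ge 0$, so $\H(-v)=0$ and $\H(v)=2\lambda$, and take $v=e_3$. I would then rerun the tangent-plane argument of Theorem \ref{thHsuperficieaunlado}: supposing $\mathrm{int}(M)$ had a point above $\Pi$, let $q$ be the highest point, so $\langle q,e_3\rangle>0$ and $q$ is interior with $N_q=\pm e_3$. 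If $N_q=e_3$, orienting $T_qM$ by $e_3$ makes it an $(\H-\H(e_3))$-surface lying above $M$, and the comparison principle (Lemma \ref{ppiocomp}) gives $0\ge\H(e_3)=2\lambda$, impossible for $\lambda>0$. If $N_q=-e_3$, the crucial point is that $\H(-e_3)=0$ makes $T_qM$ oriented by $-e_3$ a genuine $\H$-surface; as $M$ lies locally on one side of it at $q$ with matching normal, the maximum principle (Lemma \ref{ppiomax}) forces $M\subset T_qM$, a plane at positive height disjoint from $\Pi\supset\partial M$, which is absurd. Hence $\mathrm{int}(M)\subset\{\langle x,e_3\rangle\le 0\}$. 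The fully degenerate subcase $\lambda=0=\langle v,\w\rangle$, where both $\H(\pm v)$ vanish and the comparison step above is inconclusive, I would instead dispatch with the convex hull property (Proposition \ref{thconvexhull}), whose slab collapses to $\Pi$ and yields $M\subset\Pi$.

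Finally, for the equivalence: if $M$ is planar with $|\langle v,\w\rangle|=\lambda$, then $\partial M=\Gamma\subset\Pi$ forces the plane carrying $M$ to be $\Pi$ (a simple closed curve is not contained in a line, hence determines its plane), so $\mathrm{int}(M)\subset\Pi$ and the intersection is nonempty. Conversely, if some $p_0\in\mathrm{int}(M)\cap\Pi$ exists, the strict case is excluded since it gives $\mathrm{int}(M)\cap\Pi=\varnothing$, whence $|\langle v,\w\rangle|=\lambda$; then $p_0$ realises the maximum height $0$ at an interior point, its tangent plane is $\Pi$, and the $N_{p_0}=-v$ step above applies legitimately (now $p_0$ sits at height $0$, not a contradiction), so the maximum principle against the $\H$-plane $\Pi$ yields $M\subset\Pi$, i.e. $M$ is planar. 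I expect the only delicate point to be the equality case, precisely because one must replace the comparison argument of Theorem \ref{thHsuperficieaunlado} by the maximum principle and separately absorb the degenerate $\lambda=0$ subcase via the convex hull property.
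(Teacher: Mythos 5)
Your proof is correct, and its skeleton is the same as the paper's: dispatch the strict case $|\langle v,\w\rangle|>\lambda$ via Theorem \ref{thHsuperficieaunlado}, and settle the borderline case by playing the comparison principle (Lemma \ref{ppiocomp}) against the maximum principle (Lemma \ref{ppiomax}) applied to a plane oriented by $-v$, which is a genuine $\H$-surface because $\H(-v)=0$. The difference is one of completeness rather than of method, and it is in your favour. The paper's proof of the second claim starts from ``from the first part, $M$ lies in $\{\langle x,v\rangle\leq 0\}$'', but its first part only treats the strict inequality, so the inclusion in the case $|\langle v,\w\rangle|=\lambda$ --- precisely the case in which $p_0$ can exist --- is never actually established there; your highest-point argument ($N_q=v$ excluded by comparison since $\H(v)=2\lambda>0$, and $N_q=-v$ excluded because $\H(-v)=0$ makes $T_qM$ an $\H$-plane, forcing $M\subset T_qM$ by the maximum principle) supplies exactly that missing step. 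Likewise, the paper's reduction ``without loss of generality $\langle v,\w\rangle>0$ or $<0$'' silently skips the degenerate case $\lambda=\langle v,\w\rangle=0$ allowed by the hypotheses; your appeal to Proposition \ref{thconvexhull}, whose slab collapses to $\Pi$, handles it cleanly (your tangent-plane argument would also work there, since $\H(\pm v)=0$ makes the tangent plane at the highest point an $\H$-surface for either orientation). The only structural reordering in the equivalence is harmless: the paper derives $\langle v,\w\rangle=\lambda$ from the comparison inequality $\H(-v)\geq 0$ at $p_0$, whereas you derive it by noting that the strict case forces $\mathrm{int}(M)\cap\Pi=\varnothing$; both are valid.
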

\begin{proof}
If $M$ is a $\lambda$-translator, we have  $\H(x)=\langle x,\w\rangle+\lambda$. First, assume $|\langle v,\w\rangle|>\lambda$. This condition yields $\H(v)\H(-v)=\lambda^2-\langle v,\w\rangle^2<0$, hence we are in the hypothesis of Th. \ref{thHsuperficieaunlado}. Moreover, if $\mathrm{sign}(\langle v,\w\rangle)>0$ then $\H(v)=\langle v,\w\rangle+\lambda>0$ and $M$ lies in $\{x\in\R^3:\langle x,v\rangle<0\}$. In case that $\mathrm{sign}(\langle v,\w\rangle)<0$, then  $M$ lies in $\{x\in\R^3:\langle x,v\rangle>0\}$.

For the second part, assume the existence of some $p_0\in\mathrm{int}(M)\cap\Pi$. Without losing generality, we suppose $\langle v,\w\rangle>0$, which in particular yields $\langle v,\w\rangle\geq\lambda$. From the first part, we know that $M$ lies in $\{\langle x,v\rangle\leq0\}$, hence at $p_0$ we have $N_{p_0}=\pm v$. If $N_{p_0}=v$, then $\Pi$ lies locally above $M$ around $p_0$ and the mean curvature comparison principle yields
$$
0\geq H_M(p_0)=\H(v)=\langle v,\w\rangle+\lambda>0,
$$
a contradiction. Consequently, $N_{p_0}=-v$ and $M$ lies locally above $\Pi$ around $p_0$. The comparison principle with the mean curvature gives
$$H_M(p_0)=\H(N_{p_0})=\H(-v)=-\langle v,\w\rangle+\lambda\geq0,$$
and consequently $\langle v,\w\rangle=\lambda$. Then, $\Pi$ with the orientation $-v$ is an $\H$-surface and the maximum principle yields that $M$ is planar.

If $\langle v,\w\rangle<0$, which in particular yields $\langle v,\w\rangle\leq-\lambda$, the proof is similar. This time, $M$ lies above $\Pi$ around $p_0$ and $N_{p_0}=v$ necessarily. Consequently, by the comparison principle we have $\langle v,\w\rangle+\lambda\geq0$, from where we conclude $\langle v,\w\rangle=-\lambda$. This time, the maximum principle applied to $M$ and $\Pi$ with orientation $v$ allows us to assert that $M$ is contained in $\Pi$, proving the result. \end{proof}

We can combine this corollary together Cor. \ref{cor-2} to conclude the following result of symmetry for $\lambda$-translators. We illustrate the result in the case that the boundary is a circle.

\begin{corollary}
Let $M$ be a compact embedded $\lambda$-translator with density vector $v\in\s^2$ and $|\lambda|\leq 1$. If the boundary of $M$ is a circle contained in a plane $\Pi=v^\bot$, then $M$ is an open piece of a rotational $\lambda$-translator that intersects orthogonally the rotation axis.
\end{corollary}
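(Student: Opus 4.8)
The plan is to combine the one-sidedness furnished by Corollary \ref{corlambdatranslatoraunlado} with the rotational-symmetry conclusion of Corollary \ref{cor-2}, exploiting the fact that here the density vector coincides with the normal $v$ of the boundary plane. First I would record the two structural observations that make both corollaries applicable. A $\lambda$-translator with density vector $v$ is precisely an $\H$-surface for $\H(x)=\langle x,v\rangle+\lambda$, and this $\H$ is rotational about the $v$-direction, since $\H(x)=\h(\langle x,v\rangle)$ with $\h(t)=t+\lambda\in C^1([-1,1])$. Consequently the surfaces $M^{\pm}$ and the notion of rotational $\lambda$-translator are well-defined, and $\H$ meets the symmetry hypotheses needed in Corollary \ref{cor-2}. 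After reversing the orientation if necessary (which keeps the density vector $v$ fixed and sends $\lambda\mapsto-\lambda$), I may assume $0\leq\lambda\leq1$.

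Next I would verify the hypothesis of Corollary \ref{corlambdatranslatoraunlado}. Because the density vector is $v$ itself, we have $\langle v,v\rangle=1$, so $|\langle v,v\rangle|=1\geq\lambda$ and the condition $|\langle v,\emph{\w}\rangle|\geq\lambda$ holds. Applying the corollary with $\mathrm{sign}(\langle v,v\rangle)=+1$ yields $\mathrm{int}(M)\subset\{x\in\R^3:\langle x,v\rangle\leq0\}$; that is, $M$ lies at one side of $\Pi$. This supplies exactly the missing hypothesis of Corollary \ref{cor-2}.

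Finally, invoking Corollary \ref{cor-2} in the case that $M$ lies below $\Pi$ gives $\H(v)=1+\lambda>0$ and $M=M^{+}$, a rotational $\lambda$-translator that intersects the rotation axis orthogonally, which is the asserted conclusion.

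The one point requiring separate care is the borderline value $\lambda=1$. When $\lambda<1$ the inequality $|\langle v,v\rangle|=1>\lambda$ is strict, so by the \emph{moreover} clause of Corollary \ref{corlambdatranslatoraunlado} one has $\mathrm{int}(M)\cap\Pi=\varnothing$ and the passage to Corollary \ref{cor-2} is immediate. When $\lambda=1$, however, $|\langle v,v\rangle|=\lambda$, and the same clause permits $\mathrm{int}(M)\cap\Pi\neq\varnothing$ precisely when $M$ is planar: this is the plane $\Pi$ oriented by $-v$, which is an $\H$-surface since $\H(-v)=-1+\lambda=0$. This degenerate case gives the flat disc bounded by $\Gamma$, still an open piece of a (degenerate) rotational $\lambda$-translator meeting the axis orthogonally, so it is consistent with the statement. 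I expect this degenerate case to be the only real obstacle, the rest being a direct chaining of the two previously established corollaries.
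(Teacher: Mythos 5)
Your proposal is correct and follows essentially the same route as the paper, which obtains this corollary precisely by chaining the one-sidedness of Cor.~\ref{corlambdatranslatoraunlado} (applicable since $\langle v,\mathbf{w}\rangle=\langle v,v\rangle=1\geq\lambda$) with the symmetry conclusion of Cor.~\ref{cor-2}. Your additional care with the orientation normalization $\lambda\mapsto-\lambda$ and with the degenerate case $\lambda=1$, where the planar disc oriented by $-v$ appears, only makes explicit details the paper leaves implicit.
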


We finish this section with a generalization of a result due to Pyo. In \cite{Pyo} it was proved that a compact translating soliton with density $v$ spanning a circle contained in a plane $\Pi^\bot$ is a rotational surface. The techniques to proved this result can now be viewed in a natural way in the context of the theory of $\H$-surfaces.  

\begin{theorem}\label{propositionpyo}
Let $v\in\s^2$ and $\H\in C^1(\s^2)$ such that  $\H(x)=0$ for every $x\in\s^2$  orthogonal to $v$.   Assume that $\H(-x)=-\H(x)$ for all $x\in\s^2$. Let  $\Gamma$ be a closed convex curve contained in $\Pi=v^\bot$ and let $\Omega\subset\Pi$ denote the domain bounded by $\Gamma$. If $M$ is an $\H$-surface spanning $\Gamma$, then $\Gamma$ is contained in the solid cylinder $C_\Omega=\Omega\times\R v$. If, in addition, $\Gamma$ is a circle and $\H$ is rotational about $v$, then $M$ is a rotational surface.  
\end{theorem}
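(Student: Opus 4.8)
The plan is to treat the two conclusions separately, both resting on the observation that every plane containing the direction $v$ is itself an $\H$-surface: such a plane has a unit normal $n$ with $\langle n,v\rangle=0$, so $\H(n)=0$ by hypothesis, and its (zero) mean curvature therefore satisfies \eqref{h1}. Combined with the oddness $\H(-x)=-\H(x)$, this lets me run the tangency principle (Cor. \ref{ppiotangencia}) against vertical planes, without any prior control on the orientation of $M$.

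For the cylinder inclusion, I would fix a unit vector $n$ with $\langle n,v\rangle=0$ and consider the family of vertical planes $P_t=\{\langle x,n\rangle=t\}$, each of which is an $\H$-surface by the remark above. Set $a=\max\{\langle p,n\rangle:\ p\in\Gamma\}$. Arguing by contradiction, if $t_0=\max\{\langle p,n\rangle:\ p\in M\}>a$, the maximum is attained at a point $p_0$ which must be interior to $M$, since on $\Gamma$ the function $\langle\cdot,n\rangle$ is at most $a<t_0$; moreover $M$ lies on one side of $P_{t_0}$ and touches it at $p_0$. The tangency principle then forces $M\subset P_{t_0}$, which is impossible because $\Gamma\subset M$ is a closed curve contained in $\Pi=v^\perp$ and hence cannot lie in the vertical plane $P_{t_0}$. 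Thus $\langle\cdot,n\rangle\le a$ on $M$, and symmetrically $\langle\cdot,n\rangle\ge\min_\Gamma\langle\cdot,n\rangle$; intersecting these slabs over all directions $n\perp v$ and using the convexity of $\Gamma$ (so that $\overline{\Omega}$ is the intersection of its supporting half-planes) yields $M\subset C_\Omega$.

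For the second assertion, with $\Gamma$ a circle and $\H$ rotational about $v$, I would first place $M$ on one side of $\Pi$ and then invoke the Alexandrov machinery already developed. Since $\H$ is odd we have $\H(v)\H(-v)=-\H(v)^2$, so whenever $\H(v)\neq0$ the sign condition $\H(v)\H(-v)<0$ of Th. \ref{thHsuperficieaunlado} holds and that result confines $\mathrm{int}(M)$ to a single open halfspace bounded by $\Pi$. Being rotational about $v$, $\H$ is invariant under the reflection about every vector plane through $v$, while the circle $\Gamma$ is symmetric with respect to every vertical plane through its center, each of which splits $\Gamma$ into two graphs over a diameter; all three hypotheses of Th. \ref{t-s} are therefore met for each such plane. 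Applying it to every vertical plane through the center (equivalently, applying Cor. \ref{cor-2}) shows that $M$ is symmetric under every such reflection, hence $M$ is a surface of revolution about the axis through the center in the direction $v$, and in fact coincides with one of the rotational models $M^\pm$.

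The step I expect to be most delicate is securing the hypotheses needed to run the reflection argument in the second part: namely the one-sidedness with respect to $\Pi$ and the embeddedness required by Th. \ref{t-s} and Cor. \ref{cor-2}. The one-sidedness is clean as long as $\H(v)\neq0$, but the degenerate case $\H(v)=0$ (where $\h$ vanishes at both $0$ and $1$) falls outside the scope of Th. \ref{thHsuperficieaunlado} and would have to be excluded or handled separately; embeddedness, not needed for the cylinder inclusion, must also be in force here for the Alexandrov reflection to be meaningful.
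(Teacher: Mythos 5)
Your treatment of the first assertion is correct and is essentially the paper's own argument: both proofs slide vertical planes (which are $\H$-surfaces because $\H$ vanishes on $v^\bot$) against $M$ and invoke the tangency principle of Cor. \ref{ppiotangencia}; your variant, maximizing each linear function $\langle\cdot,n\rangle$ with $n\bot v$ and then writing $\overline{\Omega}$ as the intersection of its supporting half-planes, is an equivalent repackaging of the paper's sliding of a supporting plane of the convex curve $\Gamma$.

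The genuine gap is in the second assertion. The theorem deliberately does \emph{not} assume that $M$ is embedded (the paper emphasizes this immediately after the statement), yet Th. \ref{t-s}, Cor. \ref{cor-2} and Th. \ref{thHsuperficieaunlado} all require embeddedness; you acknowledge that it ``must be in force'' but never derive it, and deriving it is precisely the core of the paper's proof. Namely, using the cylinder inclusion already established, the paper shows that $M$ is a graph over $\Omega$: the vertical translates $M_s=M+se_3$, $s>0$, are again $\H$-surfaces, and if some $M_s$ met $M$, taking $s_0=\sup\{s>0: M_s\cap M\neq\varnothing\}$ would produce a first contact point, necessarily interior to both surfaces (boundary contacts are excluded because $\mathrm{int}(M)$ lies in the open cylinder by a further tangency argument, while $\partial M_{s_0}=\Gamma+s_0e_3$ and $\partial M=\Gamma$ lie on $\partial C_\Omega$ at different heights); the tangency principle would then force $M_{s_0}=M$, contradicting $\partial M_{s_0}=\Gamma+s_0e_3\neq\Gamma$. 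Hence $M_s\cap M=\varnothing$ for every $s>0$, no vertical line meets $M$ twice, and $M$ is a graph, in particular embedded. This step also repairs your second loose end, the degenerate case $\H(v)=0$: once $M$ is a graph, Cor. \ref{corolariografoladoplano} places it in a closed halfspace determined by $\Pi$ with no sign hypothesis on $\H(v)$, so Th. \ref{thHsuperficieaunlado} is never needed and Cor. \ref{cor-2} applies directly. As written, your argument establishes the rotational conclusion only under the additional hypotheses that $M$ is embedded and $\H(v)\neq0$, which is strictly weaker than the statement to be proved.
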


Notice that we do not require in the hypothesis that $M$ is embedded. 

\begin{proof}
Up to a change of coordinates, we assume $v=e_3$ and that $\Pi$ is the plane of equation $x_3=0$. Hence, $\H$ vanishes on the equator $\langle x,e_3\rangle=0$ and in particular any vertical plane is an $\H$-surface. From the hypothesis $\H(-x)=-\H(x),\ \forall x\in\s^2$ we know that the tangency principle \ref{ppiotangencia} applies. We define the solid cylinder   $C_\Omega=\Omega\times\R$.

First, we show that $\mathrm{int}(M)$ lies inside $C_\Omega$. By contradiction, assume that there exists some $p\in\mathrm{int}(M)$ lying outside $C_\Omega$. Let $\hat{p}$ be the orthogonal projection of $p$ onto  $\Pi$, so $\hat{p}\not\in\Omega$. Let $q\in \Omega$ be a point of $\Omega$ realizing the distance to $\hat{p}$. If $P$ is the vertical plane with $q\in P$, then $\Omega$ lies contained in one side of $P$ because $\Gamma$ is convex. Moving $P$ outwards $\Omega$ and parallel in the direction of its normal vector, let $P_0$ be the last position that  touches $M$. Then $P_0$ lies at one side of $M$ and the  tangency principle yields a contradiction.

Once we have proved that  $\mathrm{int}(M)$ lies inside $C_\Omega$, next we show that $M$ is a graph over $\Omega$, which in particular yields that $M$ is embedded. Let us denote the vertical translations  $M_s=M+se_3,\ s\geq0$, which are again $\H$-surfaces. By compactness and for $s$ large enough, $M_s$ and $M$ are disjoint. We decrease $s$ until reaching a  time $s_0$ where $M_{s_0}$ meets $M$ at the first time. It is not possible that $s_0>0$ because otherwise, the tangency principle implies that $M_{s_0}=M$ and we arrive to the contradiction $\partial M_{s_0}=\Gamma+s_0 e_3\not=\Gamma$. Thus necessarily by vertical translations, the first contact point occurs at $s=0$, that is, $M$ comes to its initial position. Arguing similarly with vertical translation of type $M-s e_3$, $s\geq0$, we conclude that $M$ is a graph.   

In case that $\Gamma$ is a circle and $\H$ is rotational, we use Cor. \ref{cor-2}. 
\end{proof}

As we stated, Th.  \ref{propositionpyo} generalizes the main result of \cite{Pyo} for translating solitons, since $\H(x)=\langle x,v\rangle$, and there is a large family of functions $\H$ in the conditions of the theorem. For instance, given a $C^1$ odd function $f(t),\ t\in[-1,1]$, the function $\H(x)=f(\langle x,v\rangle)$, for a fixed $v\in\s^2$, lies in the hypothesis of Th. \ref{propositionpyo}.

\section{A height   estimate in terms of the height}\label{sec5}

We obtain a relation  between the area of a compact $\H$-surface with planar boundary and the height  to the plane where its boundary lies. This result extends to the class of $\H$-surfaces others concerning the height and area of CMC surfaces in $\r3$ \cite{LoMo}, product spaces $M^2\times\R$ \cite{LeRo}, and of $\lambda$-translators in $\r3$ \cite{Lop}. In our result, we need to assume that the surface is a graph.

\begin{theorem}\label{thalturaarea}
Let   $\H>0$ and $M$ be a compact $\H$-graph on a plane $\Pi$. Suppose that the boundary of $M$ is contained in $\Pi$. If $h$ is the height of $M$ with respect to $\Pi$, then  
$$
h\leq\frac{\H_{\max}\, \mathrm{area}(M)}{2\pi},
$$
where   $\H_{\max}=\max \{\H(x):x\in\mathbb{S}^2\}$.  
\end{theorem}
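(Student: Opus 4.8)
The plan is to reduce the statement to a flux computation over the level sets of the graph and then combine it with the planar isoperimetric inequality, which is precisely where the factor $2\pi$ will emerge. After a change of coordinates I take $\Pi=\{x_3=0\}$ and write $M$ as the graph of a function $u$ over the domain $\Omega\subset\Pi$ bounded by $\Gamma=\partial M$, so that $u=0$ on $\partial\Omega$. By Cor. \ref{corolariografoladoplano}, $M$ lies in one of the closed halfspaces determined by $\Pi$; assuming $u\leq 0$ and setting $v=-u\geq 0$, the height is $h=\max_\Omega v$ and $v$ vanishes on $\partial\Omega$. The upward Gauss map is $N=(Dv,1)/\sqrt{1+|Dv|^2}$, so that \eqref{eqH} reads $\mathrm{div}\big(Dv/\sqrt{1+|Dv|^2}\big)=-2(\H\circ N)$.

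First I would establish a flux identity at each level. For a regular value $t\in(0,h)$, set $\Omega_t=\{v>t\}$, whose boundary is the level set $\{v=t\}$, with length $L(t)$ and enclosed planar area $A(t)$. Applying the divergence theorem to the field $Dv/\sqrt{1+|Dv|^2}$ over $\Omega_t$ (the outer conormal along $\{v=t\}$ being $-Dv/|Dv|$) yields
$$\int_{\{v=t\}}\frac{|Dv|}{\sqrt{1+|Dv|^2}}\,ds=2\int_{\Omega_t}(\H\circ N)\,d\Omega\leq 2\,\H_{\max}\,A(t),$$
where positivity of $\H$ guarantees that the middle term is strictly positive for $t<h$. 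Writing $\sin\theta=|Dv|/\sqrt{1+|Dv|^2}$ for the angle between $N$ and $e_3$, the left-hand side is $\int_{\{v=t\}}\sin\theta\,ds$.

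Next I would represent the area through the coarea formula. Since $\sqrt{1+|Dv|^2}/|Dv|=1/\sin\theta$ wherever $Dv\neq 0$, coarea gives $\mathrm{area}(M)=\int_\Omega\sqrt{1+|Dv|^2}\,d\Omega\geq\int_0^h\big(\int_{\{v=t\}}\tfrac{1}{\sin\theta}\,ds\big)\,dt$, the inequality absorbing the (possibly positive-measure) critical set $\{Dv=0\}$, which only contributes nonnegatively. The Cauchy--Schwarz inequality on the curve $\{v=t\}$ gives $L(t)^2\leq\big(\int_{\{v=t\}}\sin\theta\,ds\big)\big(\int_{\{v=t\}}\tfrac{1}{\sin\theta}\,ds\big)$, so by the flux identity $\int_{\{v=t\}}\tfrac{1}{\sin\theta}\,ds\geq L(t)^2/(2\,\H_{\max}A(t))$. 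The planar isoperimetric inequality $L(t)^2\geq 4\pi A(t)$ then forces $\int_{\{v=t\}}\tfrac{1}{\sin\theta}\,ds\geq 2\pi/\H_{\max}$. Integrating this over $t\in(0,h)$ produces $\mathrm{area}(M)\geq 2\pi h/\H_{\max}$, which is exactly the claimed bound.

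The main obstacle I anticipate is the measure-theoretic bookkeeping rather than any single inequality: one must restrict to regular values $t$ (via Sard's theorem) so that $\{v=t\}$ is a disjoint union of smooth closed curves bounding $\Omega_t$ and the isoperimetric inequality genuinely applies, and one must handle the critical set where $\sin\theta$ degenerates, which, as noted, only strengthens the area estimate. A useful consistency check is that equality is attained by a hemisphere of radius $1/\H_{\max}$ when $\H\equiv\H_{\max}$ (there $h=1/\H_{\max}$ and $\mathrm{area}=2\pi/\H_{\max}^2$), confirming that the constants are sharp.
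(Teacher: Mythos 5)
Your proof is correct, and it follows the same skeleton as the paper's: slice by height, apply Cauchy--Schwarz on the level curves, bound the flux term by $2\H_{\max}$ times the enclosed planar area, invoke the isoperimetric inequality, and integrate in $t$. The difference is in execution: you work nonparametrically in the base plane, while the paper works on the surface itself. The paper gets the flux bound in two steps --- the identity $\Delta x_3=2HN_3$ together with the divergence theorem on the piece $M(t)$ gives $\int_{\Gamma(t)}\langle\nu(t),e_3\rangle\,ds_t=-\int_{M(t)}2HN_3\,dM_t$, and then a second divergence theorem, applied to $e_3$ in the solid $W(t)$ bounded by $M(t)$ and $\Omega(t)$, converts $\int_{M(t)}N_3\,dM_t$ into planar area --- whereas you obtain the same bound in one stroke by applying the planar divergence theorem to \eqref{eqH} on the superlevel set $\Omega_t$. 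Likewise, the paper uses the coarea formula on $M$ in differential form ($A'(t)=-\int_{\Gamma(t)}|\nabla x_3|^{-1}\,ds_t$, with $A(t)$ a surface area), while you use the planar coarea formula in integrated form; the two chains correspond term by term under the graph projection, since the level curves are isometric to their projections and $|\nabla x_3|=\langle\nu(t),e_3\rangle=\sin\theta$. What your rendition buys is economy and rigor: one divergence theorem instead of two, no auxiliary solid $W(t)$, and explicit care with critical values via Sard, a point the paper glosses over. What the paper's intrinsic formulation buys is that it isolates exactly where the graph hypothesis is indispensable --- converting the conormal flux into planar area, the step a flux formula for $\H$-surfaces would replace, as the remark after the paper's proof explains. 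One small point you should make explicit: you tacitly take the Gauss map of the $\H$-graph to be the upward one; this needs the same brief justification the paper gives for its claim that $N$ points downwards (comparison with the tangent plane at the extremal point), or simply the observation that the opposite orientation would force your flux identity to read $\int_{\{v=t\}}\sin\theta\,ds=-2\int_{\Omega_t}(\H\circ N)\,d\Omega<0$, which is impossible.
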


\begin{proof}
Without loss of generality, we assume that $\Pi$ is the horizontal plane of equation $x_3=0$.   By Cor. \ref{corolariografoladoplano} , we know that $M$ lies at one side of $\Pi$, say $\{x\in\R^3:x_3>0\}$. Since $\H>0$, the Gauss map $N$ of $M$ points downwards, that is,  $N_3=\langle N,e_3\rangle<0$. In this setting, the height   of $M$ is $h=\max\{x_3(p): p\in M\}$.

For each $t>0$, we define $M(t)=M\cap\{x\in\R^3:x_3\geq t\}$ and $\Pi(t)=\{x\in\R^3:x_3=t\}$. Let $A(t)$ be the area of $M(t)$ and $\Gamma(t)=M(t)\cap\Pi(t)$. From the coarea formula   \cite[Th. 5.8]{Sak},
$$
A'(t)=-\int_{\Gamma(t)}\frac{1}{|\nabla x_3|}\,ds_t,
$$
where $ds_t$ is the line element of $\Gamma(t)$. Let us denote by $L(t)$ to the length of $\Gamma(t)$. On the one hand, by the Cauchy-Schwarz inequality we get
$$
L(t)^2=\left(\int_{\Gamma(t)}\,ds_t\right)^2\leq\int_{\Gamma(t)}\frac{1}{|\nabla x_3|}\,ds_t\int_{\Gamma(t)}|\nabla x_3|\,ds_t=-A'(t)\int_{\Gamma(t)}|\nabla x_3|\,ds_t,
$$
where $ds_t$ stands for the line element of $\Gamma(t)$.

Since $\Gamma(t)$ is a level curve of $x_3$ and $M(t)\subset\{x\in\R^3:x_3\geq t\}$, we have $|\nabla x_3|_{|\Gamma(t)}=\langle\nu(t),e_3\rangle$, where $\nu(t)$ is the unit inner conormal of $M(t)$ along $\Gamma(t)$. Therefore,
\begin{equation}\label{eq1}
L(t)^2\leq-A'(t)\int_{\Gamma(t)}\langle\nu(t),e_3\rangle\,ds_t.
\end{equation}
Because $\Delta x_3=2HN_3$, the divergence theorem yields
$$
\int_{M(t)}2 H N_3\,dM_t=-\int_{\Gamma(t)}\langle\nu(t),e_3\rangle\,ds_t.
$$
We substitute this integral in \eqref{eq1}. Using that $A'(t)\leq 0$ and $N_3\leq 0$, we have  
$$
L(t)^2\leq A'(t)\int_{M(t)}2 H N_3\,dM_t\leq 2\H_{\max}A'(t)\int_{M(t)}N_3\,dM_t.
$$
The curve $\Gamma(t)$ is possibly non-connected, hence the inner region bounded by it, $\Omega(t)$ consists on a finite union of compact, connected domains, $\Omega_i(t),\ i=1,...,n_t$, in $\Pi(t)$. By applying the divergence theorem to the vector field $e_3$ in the, possibly disconnected, 3-domain $W(t)$ bounded by $M(t)$ and $\Omega(t)$, we arrive to
\begin{equation}\label{eq2}
L(t)^2\leq-2\H_{\max}A'(t)\mathrm{area}(\Omega(t)).
\end{equation}
By denoting $L_i(t)$ to the length of $\partial\Omega_i(t)$, the classical isoperimetric inequality gives
$$
L(t)^2\geq\sum_{i=1}^{n_t}L_i(t)^2\geq 4\pi\sum_{i=1}^{n_t}\mathrm{area}(\Omega_i(t))=4\pi\,\mathrm{area}(\Omega(t)).
$$
Plugging this inequality in \eqref{eq2}, we deduce
$$
2\pi\leq-\H_{\max}A'(t).
$$
By integrating this inequality from $t=0$ to $t=h$ and taking into account that $A(0)$ is the area of $M$, we conclude
$$
h\leq\frac{\H_{\max}\, \mbox{area}(M)}{2\pi},
$$
which proves the result.
\end{proof}

Comparing the statement of Th. \ref{thalturaarea} with the CMC case \cite{LoMo}, the absence of a flux formula for $\H$-surfaces makes that in \eqref{eq1} and for an $\H$-surface not necessarily a graph, we cannot express the integral $\int_{\Gamma(t)}\langle\nu(t),e_3\rangle\,ds_t$ in terms of $\H_{max}$ and $\mbox{area}(\Omega(t))$. Consequently, we need to consider $M$ a graph in the statement of Th. \ref{thalturaarea}.

\section*{Acknowledgements}  

Antonio Bueno has been partially supported by the Projects P18-FR-4049 and Fundación Séneca 21937/PI/22. Rafael L\'opez is a member of the Institute of Mathematics  of the University of Granada and he   has been partially supported by  the Projects  PID2020-117868GB-I00 and MCIN/AEI/10.13039/501100011033.

\end{document}